\documentclass[11pt,reqno]{amsart}
\usepackage[T1]{fontenc}
\usepackage[utf8]{inputenc}
\usepackage{lmodern}
\usepackage[a4paper,left=27mm,right=27mm,top=27mm,bottom=28mm]{geometry}
\usepackage{amsmath,amssymb,amsthm,mathrsfs}
\usepackage{microtype}
\usepackage{enumitem}
\usepackage{needspace}
\usepackage{hyperref}
\hypersetup{colorlinks=true,linkcolor=black,citecolor=black,urlcolor=black,
  pdftitle={Direct proofs for spaces with cyclic parallel Ricci tensor},
  pdfsubject={Gradient Ricci solitons, quasi-Einstein manifolds, and critical spaces},
  pdfauthor={},bookmarksnumbered=true}
\newcommand{\Ric}{\operatorname{Ric}}
\newcommand{\Rm}{\operatorname{Rm}}
\newcommand{\tr}{\operatorname{tr}}
\newcommand{\diver}{\operatorname{div}}
\makeatletter
\newcommand{\owedge}{\mathbin{\mathpalette\owedge@{}}}
\newcommand{\owedge@}[2]{\vcenter{\hbox{\ooalign{%
  \hfil$#1\wedge$\hfil\cr$#1\bigcirc$\cr}}}}
\makeatother
\newtheorem{theorem}{Theorem}
\newtheorem{proposition}{Proposition}
\newtheorem{lemma}{Lemma}

\theoremstyle{remark}
\newtheorem{remark}{Remark}
\numberwithin{equation}{section}
\allowdisplaybreaks[2]
\setlist[enumerate]{leftmargin=*,itemsep=3pt,topsep=5pt}
\title[CYCLIC PARALLEL RICCI TENSOR]{Direct proofs for spaces with\texorpdfstring{\\}{} cyclic parallel Ricci tensor}
\author{
 Fengyuan Zhang \\
  Academy of mathematics and Systems Science\\
  \texttt{zhangfengyuan@amss.ac.cn} \\
  }
\subjclass[2020]{53C21, 53C25, 53C44}
\keywords{Cyclic parallel Ricci tensor, gradient Ricci soliton,
quasi-Einstein manifold, vacuum static space, CPE metric, Miao--Tam critical metric}

\begin{document}
\begin{abstract}
We give direct proofs of rigidity results for  gradient Ricci
solitons, compact quasi-Einstein manifolds with boundary, and critical
spaces with cyclic parallel Ricci tensor. For solitons and
quasi-Einstein manifolds, successive traces of first-order pointwise
identities yield quadratic equations for the Ricci tensor and its
modification. These equations imply radial flatness and radial
$Q$-flatness, respectively, and the corresponding global rigidity
theorems apply. For critical spaces, comparison of two expressions for
the Laplacian of the Hessian
gives a pointwise tensor identity whose scalar contraction yields a
weighted divergence formula. Integration then gives the vacuum static,
CPE, and Miao--Tam conclusions. The same comparison also proves Ricci
parallelism pointwise for solitons and quasi-Einstein manifolds.
\end{abstract}
\maketitle

\section{Introduction}

The cyclic parallel Ricci condition is expressed in components by
\begin{equation}\label{eq:cyclic}
D_{ijk}=\nabla_iR_{jk}+\nabla_jR_{ki}+\nabla_kR_{ij}=0.
\end{equation}
The author~\cite{Zhang} studied gradient shrinking Ricci solitons, compact
quasi-Einstein manifolds with boundary, and critical spaces satisfying
\eqref{eq:cyclic}, using integral identities to establish parallel Ricci
curvature. We give a different approach based on pointwise tensor
identities. For solitons and quasi-Einstein manifolds, the reduction to
radial flatness is entirely pointwise. For critical spaces, a pointwise
second-order tensor equation is derived first, and its scalar contraction
is then integrated. The soliton classification includes the shrinking,
steady, and expanding cases; the steady case reduces directly to
Ricci-flatness.

The geometric classifications are completed using the rigidity theorems
of Petersen--Wylie~\cite{PW} and He--Petersen--Wylie~\cite{HPW14},
the Hessian characterization of a round sphere due to Obata~\cite{Obata},
and the Einstein Miao--Tam classification~\cite{MT11}. The local
calculations and the hypotheses needed for these applications are
included in the proofs. 

\subsection{Three types of space}

A gradient Ricci soliton is a complete Riemannian manifold $(M^n,g)$
with a smooth potential $f$ satisfying
\begin{equation}\label{eq:soliton}
R_{ij}+\nabla_i\nabla_j f=\lambda g_{ij}.
\end{equation}
It is shrinking, steady, or expanding according as $\lambda>0$,
$\lambda=0$, or $\lambda<0$. The non-steady case means $\lambda\ne0$.

For $m>1$, a quasi-Einstein manifold with boundary has a smooth
potential $u$ such that
\begin{equation}\label{eq:quasi}
\left\{
\begin{aligned}
\nabla_i\nabla_j u&=\frac{u}{m}(R_{ij}-\lambda g_{ij})
   &&\text{in }M,\\
u&>0 &&\text{in }\operatorname{int}(M),\\
u&=0 &&\text{on }\partial M.
\end{aligned}
\right.
\end{equation}
Here $m$ and $\lambda$ are constants. We use the potential $u$ for
quasi-Einstein manifolds and $f$ for solitons and critical spaces.

The critical spaces satisfy the unified equation
\begin{equation}\label{eq:critical}
\nabla_i\nabla_j f+\frac{R}{n(n-1)}fg_{ij}
  =(f+a)\mathring R_{ij}+bg_{ij},
\qquad
\mathring R_{ij}=R_{ij}-\frac Rn g_{ij},
\end{equation}
where $a,b\in\mathbb R$ are constants. The three cases considered here
are
\begin{equation}\label{eq:critical-cases}
\begin{aligned}
(a,b)&=(0,0) &&\text{for vacuum static spaces},\\
(a,b)&=(1,0) &&\text{for CPE metrics},\\
(a,b)&=\left(0,-\frac{1}{n-1}\right)
  &&\text{for Miao--Tam critical metrics}.
\end{aligned}
\end{equation}
For the closed static and CPE results below, the potential is required
to be nonconstant. A closed manifold is compact and has no boundary.
For a Miao--Tam critical metric, $f$ is positive in the interior and
vanishes on the boundary.

\subsection{Pointwise identities and comparison with earlier proofs}
In ~\cite{Zhang}, the author focused on constructing unconditional integral identities on various spaces and then used the cyclic Ricci parallelism condition to control the curvature terms arising in the integrals. The difference from that integral approach is that here the cyclic condition first yields closed tensor identities along the gradient of the potential. For a gradient Ricci soliton, we obtain
\begin{equation}\label{eq:intro-soliton-identity}
\nabla_kf\,\nabla_kR_{ij}
=2\lambda R_{ij}-2R_{ik}R_{kj}.
\end{equation}
Since $R$ is constant, successive traces give
\[
|\Ric|^2=\lambda R,\qquad
\tr(\Ric^3)=\lambda^2R,\qquad
\tr(\Ric^4)=\lambda^3R.
\]
Consequently,
\[
\left|R_{ik}R_{kj}-\lambda R_{ij}\right|^2=0.
\]
This proves radial flatness without integration or a cutoff argument.
Neither the tensor identity nor the trace recursion uses the sign of
$\lambda$. Petersen--Wylie's rigidity theorem~\cite[Theorem~1.2]{PW}
therefore gives both the shrinking and expanding classifications in
Theorem~\ref{thm:soliton}. When $\lambda=0$, the first trace already gives
$\Ric=0$, and the steady classification follows from
\cite[Proposition~3.2]{PW}.

For quasi-Einstein manifolds, let $\rho$ and $P$ be as in
\eqref{eq:rho-P}. The corresponding first-order identity is
\begin{equation}\label{eq:intro-quasi-identity}
\nabla_ku\,\nabla_kP_{ij}
=\frac{2u}{m}\bigl(P_{ik}P_{kj}-(\lambda-\rho)P_{ij}\bigr).
\end{equation}
The same trace argument yields
$P_{ik}P_{kj}=(\lambda-\rho)P_{ij}$ and then radial $Q$-flatness.
The global classification follows from
He--Petersen--Wylie~\cite[Theorem~1.11]{HPW14}.

For the critical equation \eqref{eq:critical}, comparison of the two
expressions for $\Delta(\nabla_i\nabla_jf)$ gives
\begin{equation}\label{eq:intro-critical-identity}
2(f+a)\Delta R_{ij}+4\nabla_kf\,\nabla_kR_{ij}
+\left(\frac{aR}{n-1}+nb\right)\mathring R_{ij}=0.
\end{equation}
Its contraction with $(f+a)\mathring R_{ij}$ gives the common divergence
formula for the three critical-space results. In this case the final
rigidity argument still uses compactness and integration; it is not a
purely pointwise classification.

\Needspace{29\baselineskip}
\subsection{Main results}

All manifolds in the statements below are smooth and connected, and
$n\geq3$. Simple connectedness is stated explicitly where a direct
product conclusion uses it. In all five main theorems, a solution is
called nontrivial when its given potential $f$ or $u$ is nonconstant.
A zero-dimensional factor $N^0$ is understood to be a point.

\begin{theorem}[Gradient Ricci solitons]
\label{thm:soliton}
Let $(M^n,g,f)$ be a complete nontrivial gradient Ricci soliton satisfying
\eqref{eq:soliton} with cyclic parallel Ricci tensor. Then the Ricci
tensor is parallel. If $\lambda\ne0$, then
\[
(M^n,g)\cong\bigl(N^{n-k}\times\mathbb R^k\bigr)/\Gamma,
\qquad 1\leq k\leq n,
\qquad \Ric_N=\lambda g_N,
\]
where $N$ is complete and simply connected, and $\Gamma$ acts freely
and properly discontinuously by product isometries preserving the
lifted potential. After translating the Euclidean factor and adding a
constant, the lifted potential is $f=\frac{\lambda}{2}|x|^2$, and the
action on $\mathbb R^k$ is orthogonal. The quotient is trivial when $M$
is simply connected. More precisely:
\begin{enumerate}[label=\textup{(\roman*)}]
\item If $\lambda>0$, the soliton is shrinking, $N$ is compact, and
$\Gamma$ is finite.
\item If $\lambda<0$, the soliton is expanding, $N$ has negative
Einstein constant when it has positive dimension, and $\Gamma$ need
not be finite.
\item If $\lambda=0$, the soliton is steady and $\Ric=0$. There is a
global isometric splitting of the original manifold
\[
(M^n,g)\cong(N^{n-1}\times\mathbb R,g_N+dt^2),
\qquad \Ric_N=0,
\]
where $N$ is complete and connected. In this splitting,
$f(q,t)=ct+b$ for constants $c\ne0$ and $b\in\mathbb R$.
No simple connectedness assumption is required in this case.
\end{enumerate}
\end{theorem}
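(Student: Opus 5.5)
The argument first derives the first-order identity \eqref{eq:intro-soliton-identity} pointwise, then uses trace recursion to get radial flatness, and finally invokes Petersen--Wylie for the global classification.

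\emph{Step 1: $R$ is constant.} Tracing \eqref{eq:cyclic} over $j=k$ gives $2\nabla_jR_{ij}+\nabla_iR=0$. Combined with the contracted Bianchi identity $\nabla_jR_{ij}=\frac12\nabla_iR$, this yields $2\nabla_iR=0$.

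\emph{Step 2: the soliton identities.} The standard soliton identity $\nabla_iR=2R_{ij}\nabla_jf$ then gives $R_{ij}\nabla_jf=0$. Commuting covariant derivatives in \eqref{eq:soliton} gives
\[
\nabla_iR_{jk}-\nabla_jR_{ik}=-R_{ijkl}\nabla_lf
\]
(up to the sign convention). Contract $D_{ijk}=0$ with $\nabla_kf$. Next, differentiate $R_{jk}\nabla_kf=0$ in direction $i$ and use $\nabla_i\nabla_kf=\lambda g_{ik}-R_{ik}$; this gives
\[
\nabla_kf\,\nabla_iR_{jk}=-R_{jk}(\lambda g_{ik}-R_{ik})=R_{ik}R_{kj}-\lambda R_{ij}.
\]
The same expression holds for $\nabla_kf\,\nabla_jR_{ki}$. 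Hence the cyclic condition yields
\[
\nabla_kf\,\nabla_kR_{ij}=-2\bigl(R_{ik}R_{kj}-\lambda R_{ij}\bigr),
\]
which is \eqref{eq:intro-soliton-identity}. Notably, this step avoids the curvature term altogether.

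\emph{Step 3: trace recursion.} Tracing \eqref{eq:intro-soliton-identity} and using $\nabla R=0$ gives $|\Ric|^2=\lambda R$. Contracting with $R_{ij}$ gives
\[
\tfrac12\nabla_kf\,\nabla_k|\Ric|^2=2\lambda|\Ric|^2-2\tr(\Ric^3).
\]
The left side vanishes because $|\Ric|^2$ is constant, so $\tr\Ric^3=\lambda^2R$. Contracting with $R_{ik}R_{kj}$ gives $\frac13\nabla_kf\,\nabla_k\tr\Ric^3=0$, hence $\tr\Ric^4=\lambda^3R$. Expanding,
\[
|\Ric^2-\lambda\Ric|^2=\tr\Ric^4-2\lambda\tr\Ric^3+\lambda^2|\Ric|^2=0,
\]
so $\Ric^2=\lambda\Ric$. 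Substituting back into \eqref{eq:intro-soliton-identity} gives $\nabla_{\nabla f}\Ric=0$. Together with $\Ric(\nabla f,\cdot)=0$, the commutation formula then gives $R_{ijkl}\nabla_lf=0$ after using $R_{ik}\nabla_kf=0$. This is radial flatness.

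\emph{Step 4: global conclusions.} For $\lambda\neq0$, I apply \cite[Theorem~1.2]{PW}: a complete nontrivial gradient soliton that is radially flat with constant scalar curvature is a quotient $(N\times\mathbb R^k)/\Gamma$ with $N$ Einstein with constant $\lambda$. Ricci parallelism follows from the product structure. In case (i), Myers' theorem gives compactness of $N$, and $\Gamma$ is finite because it preserves the proper function $f=\frac\lambda2|x|^2$ plus the compact factor. For $\lambda=0$, the first trace gives $|\Ric|^2=0$. Then $\nabla^2f=0$ with $f$ nonconstant, so $\nabla f$ is a nonzero parallel field, and de Rham splitting along the complete flow (\cite[Proposition~3.2]{PW}) gives $N\times\mathbb R$ with $f=ct+b$ without requiring simple connectedness.

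\emph{Main obstacle.} The expected obstacle is checking that the hypotheses of \cite[Theorem~1.2]{PW} are exactly met (radial flatness in their sense, nontriviality), and carefully deriving the quotient details: orthogonality of the action on $\mathbb R^k$, and the conclusion that $N$ has negative Einstein constant in case (ii). I would handle these by noting that $\Gamma$ preserves $f$, hence fixes the minimum set of $|x|^2$ and so acts orthogonally.
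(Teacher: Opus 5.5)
Your route is the paper's own. You derive the first-order identity by differentiating $R_{jk}\nabla_kf=0$ and contracting $D_{ijk}$ with $\nabla_kf$. You then run the trace recursion to get $R_{ik}R_{kj}=\lambda R_{ij}$ and $\nabla_{\nabla f}\Ric=0$. Finally you invoke Petersen--Wylie's Theorem~1.2 for $\lambda\neq0$ and their Proposition~3.2 for $\lambda=0$. Everything up to $\nabla_{\nabla f}\Ric=0$ is correct.

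\textbf{The error: the passage to radial flatness.} The commutation identity reads $R_{ijkl}\nabla_lf=\nabla_jR_{ik}-\nabla_iR_{jk}$. What you have shown is only that $\nabla\Ric$ vanishes once one of its slots is contracted with $\nabla f$. No slot on the right-hand side is so contracted, so $R_{ijkl}\nabla_lf=0$ does not follow. In fact, that uncontracted identity says $\nabla\Ric$ is Codazzi, hence totally symmetric, and then $D_{ijk}=3\nabla_iR_{jk}=0$. So it is equivalent to the Ricci-parallelism you are trying to prove, and the vanishing of the $\nabla f$-contractions of $\nabla\Ric$ does not imply it pointwise.

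\textbf{The fix.} Contract once more, with $\nabla_jf$:
\[
R_{ijkl}\nabla_jf\,\nabla_lf
=\nabla_jf\,\nabla_jR_{ik}+R_{jk}\nabla_i\nabla_jf
=\lambda R_{ik}-R_{ij}R_{jk}=0 .
\]
The first equality uses the derivative of $R_{jk}\nabla_jf=0$. The second uses $\nabla_{\nabla f}\Ric=0$ and the soliton equation. The last uses $\Ric^2=\lambda\Ric$. This is exactly the hypothesis $\sec(E,\nabla f)=0$ in Petersen--Wylie, and it is what the paper proves. The stronger identity $R_{ijkl}\nabla_lf=0$ is true, but only a posteriori. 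It holds once $\nabla\Ric=0$ is known, either from the classification or from the second-order comparison $\Delta R_{ij}=\nabla_kf\,\nabla_kR_{ij}$ combined with constancy of $|\Ric|^2$.

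\textbf{Smaller omissions in Step 4.}
\begin{enumerate}
\item The claim in (ii) that $\Gamma$ need not be finite requires an example. A closed hyperbolic surface times $\mathbb R^{n-2}$ with $f=-|x|^2/2$ works, as in the paper.
\item Your orthogonality argument presupposes that $\Gamma$ acts by product isometries. This holds because deck transformations preserve the lifted Ricci tensor $\lambda g_N\oplus0$, hence its eigendistributions $TN$ and $T\mathbb R^k$. Preserving $f=\frac{\lambda}{2}|x|^2$ then forces the Euclidean component to fix the origin. The paper simply cites the rigid-bundle description of Petersen--Wylie here.
\item You should also record that $k\geq1$ because $f$ is nonconstant.
\end{enumerate}
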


\begin{theorem}\label{thm:quasi}
Let $(M^n,g,u)$ be a nontrivial compact $m$-quasi-Einstein manifold
with $m>1$, nonempty smooth boundary, and cyclic parallel Ricci
tensor. Then $\lambda>0$, the Ricci tensor is parallel, and the
universal cover is isometric to
\[
\mathbb S^k_+\!\left(\sqrt{\frac{m+k-1}{\lambda}}\right)
   \times N^{n-k},
\qquad 1\leq k\leq n,
\qquad \Ric_N=\lambda g_N.
\]
The factor $N$ is closed when it has positive dimension, and $M$
is a finite quotient preserving the lifted potential. The case
$n-k=1$ is impossible. If $k=n$, $M$ is the round hemisphere itself.
For $k=1$, the hemisphere factor is an interval of length
$\pi\sqrt{m/\lambda}$.
\end{theorem}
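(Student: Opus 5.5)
The plan mirrors the soliton case: turn the cyclic condition into the first-order identity \eqref{eq:intro-quasi-identity}, use trace recursion to get a quadratic equation for $P$, deduce radial $Q$-flatness, and then apply He--Petersen--Wylie~\cite[Theorem~1.11]{HPW14}.

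\emph{Step 1: constant scalar curvature and $\Ric(\nabla u)$.} Trace \eqref{eq:cyclic} over $j,k$ and use the contracted Bianchi identity $2\nabla_kR_{ki}=\nabla_iR$. This gives $2\nabla_iR=0$, so $R$ is constant. Next, take the divergence of \eqref{eq:quasi} and commute derivatives via $\nabla_k\nabla_i\nabla_ju-\nabla_i\nabla_k\nabla_ju=R_{kijl}\nabla_lu$. Since $\nabla R=0$, this expresses $R_{jk}\nabla_ku$ as a scalar multiple of $\nabla_ju$. The multiple is built from $u$, $|\nabla u|^2$, $\lambda$ and $R$, and I expect it to be exactly $\rho$ from \eqref{eq:rho-P}, with $P$ defined so that $P_{jk}\nabla_ku=0$. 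I would also record the standard first integral $u\Delta u+(m-1)|\nabla u|^2+\lambda u^2=\mu$, which makes $\rho$ explicit.

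\emph{Step 2: the identity \eqref{eq:intro-quasi-identity}.} Contract \eqref{eq:cyclic} with $\nabla_ku$ to get
\[
\nabla_ku\,\nabla_kR_{ij}=-\nabla_ku\,\nabla_iR_{jk}-\nabla_ku\,\nabla_jR_{ki}.
\]
Move the derivative off $\Ric$ in each term on the right:
\[
\nabla_ku\,\nabla_iR_{jk}=\nabla_i(R_{jk}\nabla_ku)-R_{jk}\nabla_i\nabla_ku .
\]
Then substitute Step 1 for $R_{jk}\nabla_ku$ and \eqref{eq:quasi} for $\nabla_i\nabla_ku$. The quadratic term $\frac{u}{m}R_{ik}R_{kj}$ appears, and the derivative of the coefficient $\rho$ contributes terms proportional to $\nabla_iu\nabla_ju$. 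Rewriting everything in terms of $P$ should absorb those terms, because $P$ annihilates $\nabla u$. The result is \eqref{eq:intro-quasi-identity}.

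\emph{Step 3: trace recursion.} Set $c=\lambda-\rho$ and $T_p=\tr(P^p)$. Contracting \eqref{eq:intro-quasi-identity} with $P^{p-1}$ gives
\[
\nabla u\cdot\nabla T_p=\frac{2pu}{m}\,(T_{p+1}-cT_p).
\]
If $T_1$ and $c$ are constant, then:
\begin{itemize}
\item $p=1$ gives $T_2=cT_1$ wherever $u>0$, hence everywhere by continuity;
\item the left side for $p=2$ then vanishes, giving $T_3=cT_2$;
\item likewise $T_4=cT_3$.
\end{itemize}
Therefore $|P^2-cP|^2=T_4-2cT_3+c^2T_2=0$, i.e.\ $P_{ik}P_{kj}=(\lambda-\rho)P_{ij}$. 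The eigenvalues of $P$ are then $0$ or $c$, with $P(\nabla u)=0$, which is radial $Q$-flatness.

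\emph{Main obstacle.} I expect the hardest point to be checking that $\rho$ and $\tr P$ really are constant, which follows only if \eqref{eq:rho-P} is normalized appropriately. If $\rho$ is not constant, I would use $\nabla\rho\parallel\nabla u$ and Step 1 to show that the extra terms cancel in each trace. Failing that, I would integrate the identities over the compact $M$: boundary terms vanish since $u=0$ on $\partial M$, so $\int u\,|P^2-cP|^2=0$.

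\emph{Step 4: classification.} Apply \cite[Theorem~1.11]{HPW14} to get the hemisphere$\times$Einstein splitting on the universal cover, and deduce that the Ricci tensor is parallel. Positivity of $\lambda$ follows from the hemisphere factor, or directly from the boundary condition together with $\mu>0$. The case $n-k=1$ is excluded because a one-dimensional factor has $\Ric_N=0\neq\lambda g_N$. Finiteness of the quotient follows from compactness of $M$.
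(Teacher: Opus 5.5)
Your Steps 1--3 follow the paper's argument and are correct: differentiate $P\nabla u=0$, contract the cyclic condition with $\nabla u$, then run the recursion $T_2=cT_1$, $T_3=cT_2$, $T_4=cT_3$.

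The ``main obstacle'' you anticipate does not arise. The divergence computation in Step 1 gives exactly $(m-1)R_{jk}\nabla_ku=\bigl((n-1)\lambda-R\bigr)\nabla_ju$. So the multiple is the constant $\rho$ of \eqref{eq:rho-P}, with no dependence on $u$ or $|\nabla u|^2$. Hence $\rho$, $c$ and $\tr P$ are constant once $R$ is, no $\nabla_iu\nabla_ju$ terms appear in Step 2, and the integral fallback is unnecessary.

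The first genuine gap is the sentence ``the eigenvalues of $P$ are $0$ or $c$, with $P(\nabla u)=0$, which is radial $Q$-flatness.'' The condition $Q_{ijkl}\nabla_ju\nabla_lu=0$ is a statement about $\Rm$; it cannot be read off from the spectrum of $P$. You need to commute third derivatives of $u$ to get the specialization of \eqref{eq:Q-radial}, namely $Q_{ijkl}\nabla_lu=\frac{u}{m}(\nabla_iP_{jk}-\nabla_jP_{ik})$; the algebraic terms cancel using $\nabla\rho=0$ and $P\nabla u=0$. Then contract with $\nabla_ju$. Both resulting terms vanish because $P^2=cP$ kills the right-hand sides of $\nabla_{\nabla u}P=\frac{2u}{m}(P^2-cP)$ and of $(\nabla_iP_{jk})\nabla_ku=-\frac{u}{m}(P^2-cP)_{ij}$. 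All of this is available from your Step 2, but it has to be carried out.

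The global part has an ordering problem and a faulty justification. The paper establishes $\lambda>0$, together with boundedness of the lifted curvature, \emph{before} invoking \cite[Theorem~1.11]{HPW14}, as one of the hypotheses it checks there. So obtaining $\lambda>0$ ``from the hemisphere factor'' is circular. Your alternative via $\mu$ can work, but it needs two missing steps:
\begin{enumerate}
\item $\nabla u\neq0$ on $\partial M$, by ODE uniqueness along geodesics, so that $\mu=(m-1)|\nabla u|^2>0$ there;
\item evaluation at an interior maximum, where $\mu=u\Delta u+\lambda u^2\le\lambda u^2$.
\end{enumerate}

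The paper argues as follows:
\begin{enumerate}
\item $c\neq0$, since otherwise $P=0$, $\nabla^2u=0$ and $u\equiv0$.
\item $k=\dim\ker P\ge1$, because $\nabla u\in\ker P$.
\item At the maximum, the Hessian restricted to $\ker P$ is $-\frac{u}{m}c\,g\le0$, so $c\ge0$ and hence $c>0$.
\item Tracing $P$ gives $\rho=\frac{k-1}{m+k-1}\lambda$ and $c=\frac{m}{m+k-1}\lambda$.
\end{enumerate}
This yields $\lambda>0$. It also gives the radius $\sqrt{(m+k-1)/\lambda}$ of the hemisphere factor, which your sketch never determines, and from it the $k=1$ interval length $\pi\sqrt{m/\lambda}$.

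Also, ``finiteness of the quotient follows from compactness of $M$'' is not a valid reason, since compact manifolds can have infinite fundamental group. What is needed is compactness of the universal cover. That follows from Bonnet--Myers on $N$, using $\Ric_N=\lambda g_N$, $\lambda>0$ and $\dim N\ge2$ once $\dim N=1$ is excluded.

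Finally, you omit the $k=n$ clause. An isometry of the hemisphere preserving $u$ fixes its unique maximum, so a free action must be trivial.
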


\begin{theorem}\label{thm:static}
Let $(M^n,g,f)$ be a nontrivial closed simply connected vacuum static
space with cyclic parallel Ricci tensor. Then
$R>0$, the Ricci tensor is parallel, and
\[
(M^n,g)=
\mathbb S^k\!\left(\sqrt{\frac{k(n-1)}R}\right)
  \times N^{n-k},
\qquad 2\leq k\leq n,
\qquad \Ric_N=\frac{R}{n-1}g_N,
\]
where $N$ is a closed Einstein manifold when it has positive
dimension. The potential depends only on the sphere factor and is
a nonzero multiple of a height function.
\end{theorem}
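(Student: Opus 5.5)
We begin with the vacuum static case $(a,b)=(0,0)$ of \eqref{eq:critical}, namely $\nabla_i\nabla_jf=f\mathring R_{ij}-\frac{R}{n(n-1)}fg_{ij}$. Taking the trace gives $\Delta f=-\frac{R}{n-1}f$. A standard argument (divergence of the static equation together with the contracted Bianchi identity) shows that $R$ is constant wherever $f\ne0$. Since $f$ is nonconstant, the zero set of $f$ has empty interior, so $R$ is constant on all of $M$. Integrating $f\Delta f$ over the closed manifold gives $\int|\nabla f|^2=\frac{R}{n-1}\int f^2$, so $R>0$; the cases $R\le0$ would force $f$ to be constant. The cyclic condition \eqref{eq:cyclic} traced once gives $2\nabla_kR_{ik}+\nabla_iR=0$, which is consistent with $R$ constant and shows $\Ric$ is divergence free.

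The central step is the pointwise identity \eqref{eq:intro-critical-identity} with $a=b=0$:
\[
2f\Delta R_{ij}+4\nabla_kf\,\nabla_kR_{ij}=0.
\]
We obtain it by computing $\Delta\nabla_i\nabla_jf$ in two ways. The first way differentiates the equation directly. The second commutes derivatives, $\Delta\nabla_i\nabla_j f=\nabla_i\nabla_j\Delta f+$ curvature terms, and in this comparison the cyclic condition removes the terms involving $\nabla_k f\nabla_i R_{jk}$. Contracting with $f\mathring R_{ij}=fR_{ij}-\frac Rn fg_{ij}$ and using $\nabla R=0$ gives
\[
f^2\Delta|\mathring R|^2/2 - f^2|\nabla \Ric|^2+ f\nabla_kf\,\nabla_k|\mathring R|^2=0.
\]
We then rewrite this as the weighted divergence formula $\tfrac12\diver\!\bigl(f^2\nabla|\mathring R|^2\bigr)=f^2|\nabla\Ric|^2$. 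Integrating over the closed manifold gives $\int f^2|\nabla\Ric|^2=0$. Hence $\nabla\Ric=0$ on $\{f\ne0\}$, which is dense, so $\Ric$ is parallel. I expect this step to be the main obstacle. One must check that the curvature terms from commuting derivatives (the $R_{ikjl}\mathring R_{kl}$ and $R_{ik}\mathring R_{kj}$ terms) cancel exactly between the two expressions. This cancellation has to use the equation itself, and it holds only after the cyclic condition is substituted.

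To classify, note that $M$ is simply connected and $\Ric$ is parallel, so de Rham decomposition splits $M$ into a product of factors according to the eigenspaces of $\Ric$, each factor Einstein with eigenvalue $\mu_\alpha$. The Hessian $\nabla^2f=f(\Ric-\frac{R}{n-1}g)$ is block diagonal. On factor $\alpha$ it equals $f(\mu_\alpha-\frac{R}{n-1})g_\alpha$, and the mixed blocks vanish. The mixed blocks vanishing means $\nabla f$ in each factor direction depends only on that factor, so $f=\sum f_\alpha$. Compatibility then forces $f$ to depend on a single factor $S^k$ with $\nabla^2 f=-cfg$ there, where $c=\frac{R}{n-1}-\mu>0$; on the remaining factors $N$ we have $\mu_N=\frac{R}{n-1}$. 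If $k=1$, the factor is a circle and the equation $f''=-cf$ together with Einstein constant $0$ on that factor gives $c=\frac{R}{n-1}$ but would make $R$ inconsistent, so $k\ge2$. Obata's theorem~\cite{Obata} on the closed factor identifies it as a round sphere $\mathbb S^k$ with $\mu=(k-1)c$. Solving $c=\frac{R}{n-1}-(k-1)c$ gives $c=\frac{R}{k(n-1)}$, which is the stated radius, and $f$ is a first eigenfunction, that is, a height function.
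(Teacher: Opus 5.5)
Your route is the same as the paper's. You derive the identity $2f\Delta R_{ij}+4\nabla_kf\,\nabla_kR_{ij}=0$ from two computations of $\Delta(\nabla_i\nabla_jf)$, contract it into $\tfrac12\diver(f^2\nabla|\mathring{\Ric}|^2)=f^2|\nabla\Ric|^2$, integrate, and finish with de Rham and Obata. There is, however, a genuine error in your exclusion of $k=1$, and a few other steps need fixing.

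\textbf{The exclusion of $k=1$.} There is no inconsistency in $R$. If the active factor is one-dimensional, then $\mu=0$ and $c=R/(n-1)$. The complementary factor is $(n-1)$-dimensional with $\Ric_N=\frac{R}{n-1}g_N$, so the trace is $0+(n-1)\cdot\frac{R}{n-1}=R$, which is consistent. Indeed, $\mathbb S^1\bigl(\sqrt{(n-1)/R}\bigr)\times N^{n-1}$ with $f=\cos\bigl(\sqrt{R/(n-1)}\,t\bigr)$ is a nontrivial closed vacuum static space with parallel Ricci tensor; the paper records this case in the remark after its proof.

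What rules it out is simple connectedness, which you used only for de Rham. The factors of a closed simply connected manifold are closed and simply connected. A one-dimensional factor would therefore have to be $\mathbb R$, contradicting compactness. This is exactly where the hypothesis is needed for the stated conclusion.

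\textbf{Two unproved assertions in the classification.} First, you need that exactly one factor has $\mu\neq R/(n-1)$:
\begin{itemize}
\item at least one exists, since otherwise $R=nR/(n-1)$, contradicting $R>0$;
\item at most one exists, since each such factor's block of the static equation forces $f$ to be constant along all other factors.
\end{itemize}
Second, you need $c>0$. On the closed active factor, $\nabla^2f=\kappa fg$ with $\kappa\neq0$, and integrating $f\Delta f=k\kappa f^2$ gives $\kappa<0$. The paper instead uses that $f''=\kappa f$ has no bounded nonzero solution on $\mathbb R$ when $\kappa>0$.

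\textbf{The mechanism behind the central identity.} The identity you state is correct, but your account of how it arises is not. The curvature terms do not cancel between the two expressions. After substituting the static equation, the curvature block of the Hessian--Laplacian formula is $2f\bigl(R_{ik}R_{kj}-R_{ikjl}R_{kl}\bigr)$. This equals $-f\Delta R_{ij}$ by $\Delta R_{ij}=2R_{ikjl}R_{kl}-2R_{ik}R_{kj}$, which is the divergence of $D_{ijk}=0$ (Lemma~\ref{lemma:cyclic}). It adds to the $f\Delta R_{ij}$ from the direct computation, which is why the coefficient is $2f$. Genuine cancellation would instead have produced $f\Delta R_{ij}+4\nabla_kf\,\nabla_kR_{ij}=0$. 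So this second-order consequence of the cyclic condition is an essential input, not only the uncontracted form you mention.

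\textbf{Density of $\{f\neq0\}$.} You use this twice, and it needs proof. Along any geodesic the static equation is a linear second-order ODE for $f$. If $f$ vanished on an open set, then $f$ and $\nabla f$ would vanish at a point, forcing $f\equiv0$.

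\textbf{Constancy of $R$.} This needs no detour through density. The traced cyclic condition together with the contracted Bianchi identity gives $\nabla R=0$ directly.
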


The parallel-Ricci conclusion for vacuum static spaces does not
require simple connectedness. The corresponding universal-cover
description, including the possible one-dimensional active factor,
is given after its proof.

\begin{theorem}\label{thm:cpe}
Let $(M^n,g,f)$ be a nontrivial closed CPE metric with cyclic parallel
Ricci tensor and scalar curvature $R=n(n-1)$.
Then $(M^n,g)$ is isometric to the unit round sphere $\mathbb S^n(1)$.
\end{theorem}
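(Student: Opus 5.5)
Our plan is to specialize the pointwise identity \eqref{eq:intro-critical-identity} to the CPE case $(a,b)=(1,0)$ and derive an integral identity that forces $\mathring R_{ij}=0$. Then Obata's theorem~\cite{Obata} applies.

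First, the cyclic condition \eqref{eq:cyclic} implies that $R$ is constant. Tracing \eqref{eq:cyclic} gives $2\nabla_jR_{ij}+\nabla_iR=0$, and combining this with the contracted Bianchi identity $\nabla_jR_{ij}=\tfrac12\nabla_iR$ gives $\nabla R=0$. Tracing \eqref{eq:critical} with $R=n(n-1)$ and $b=0$ yields $\Delta f+nf=0$, so $\int_M f=0$. For CPE, \eqref{eq:intro-critical-identity} reads
\[
2(f+1)\Delta R_{ij}+4\nabla_kf\,\nabla_kR_{ij}+n\,\mathring R_{ij}=0.
\]
We contract it with $(f+1)\mathring R_{ij}$. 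Since $R$ is constant, $\nabla \mathring R=\nabla\Ric$. Using
\[
\nabla_k\bigl((f+1)^2\mathring R_{ij}\nabla_k\mathring R_{ij}\bigr)
=(f+1)^2\bigl(|\nabla\Ric|^2+\mathring R_{ij}\Delta R_{ij}\bigr)
+2(f+1)\nabla_kf\,\mathring R_{ij}\nabla_kR_{ij},
\]
the first two terms combine to $2\diver\bigl((f+1)^2\mathring R_{ij}\nabla\mathring R_{ij}\bigr)-2(f+1)^2|\nabla\Ric|^2$. Integrating over the closed manifold then gives
\[
2\int_M (f+1)^2|\nabla\Ric|^2=n\int_M (f+1)|\mathring{\Ric}|^2 .
\]

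The key remaining input is the standard CPE structural identity. Taking the divergence of \eqref{eq:critical}, and using $\nabla_j\mathring R_{ij}=0$ together with $\Delta f=-nf$, gives
\[
\nabla_i\Delta f+R_{ij}\nabla_jf+\tfrac{R}{n(n-1)}\nabla_if=\mathring R_{ij}\nabla_jf,
\]
which turns out to be an identity. A better route is to integrate $|\mathring{\Ric}|^2$ against the equation. Contracting \eqref{eq:critical} with $\mathring R_{ij}$ gives $\nabla_i\nabla_jf\,\mathring R_{ij}=(f+1)|\mathring{\Ric}|^2$. Integrating by parts, the left side integrates to $-\int\nabla_jf\,\nabla_i\mathring R_{ij}=0$. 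Hence $\int_M(f+1)|\mathring{\Ric}|^2=0$. The right side of the integral identity therefore vanishes, so $(f+1)\nabla\Ric=0$.

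It remains to show $f\neq-1$ on a dense set. The set $\{f=-1\}$ has empty interior: if it had interior, then $\nabla^2f=0$ there while \eqref{eq:critical} forces $f=0$, a contradiction. Hence $\nabla\Ric=0$ everywhere by continuity. With $\Ric$ parallel, the pointwise identity reduces to $n\mathring R_{ij}=0$, so $g$ is Einstein with $\Ric=(n-1)g$. Then \eqref{eq:critical} becomes $\nabla^2f=-fg$ with $f$ nonconstant, and Obata's theorem gives $\mathbb S^n(1)$.

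The main obstacle is the sign and density step: we must extract $\nabla\Ric=0$ from the weighted identity. The weight $(f+1)^2$ is nonnegative, but $(f+1)$ on the right changes sign. The vanishing $\int(f+1)|\mathring{\Ric}|^2=0$ obtained from the divergence-free property of $\mathring{\Ric}$ is what rescues this, and it must be checked carefully.
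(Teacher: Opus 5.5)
Your proposal is correct and follows essentially the same route as the paper. Both contract the CPE tensor identity with $(f+1)\mathring R_{ij}$. Both remove the sign-indefinite term using $\int_M(f+1)|\mathring{\Ric}|^2\,dV=0$, which comes from $\nabla_j\mathring R_{ij}=0$. Both then get $\nabla\Ric=0$ from the density of $\{f\neq-1\}$, read off $\mathring{\Ric}=0$ from the pointwise identity, and apply Obata. You can delete the abandoned detour through the divergence of the CPE equation, and the unused fact $\int_M f\,dV=0$.
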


\begin{theorem}\label{thm:mt}
Let $(M^n,g,f)$ be a nontrivial compact oriented Miao--Tam critical
metric with nonempty smooth boundary and cyclic parallel Ricci tensor.
Assume $f>0$ in the interior and $f=0$ on the boundary. Then the
metric is Einstein, and $(M^n,g)$ is isometric to a geodesic ball
in a simply connected space form of sectional curvature
$R/[n(n-1)]$. According to the sign of $R$, the ambient space form
is Euclidean, hyperbolic, or spherical.
\end{theorem}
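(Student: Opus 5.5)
We begin from the unified critical equation \eqref{eq:critical} with $(a,b)=(0,-\frac1{n-1})$ and first show that $R$ is constant. Tracing the cyclic condition \eqref{eq:cyclic} gives $2\nabla_jR_{ij}+\nabla_iR=0$. Combined with the contracted Bianchi identity $\nabla_jR_{ij}=\frac12\nabla_iR$, this yields $\nabla R=0$.

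Next we derive the pointwise identity \eqref{eq:intro-critical-identity}. We compute $\Delta(\nabla_i\nabla_jf)$ in two ways. The first is to apply $\Delta$ directly to the right-hand side of \eqref{eq:critical}. The second is to commute derivatives, $\Delta\nabla_i\nabla_jf=\nabla_i\nabla_j\Delta f+$ curvature terms, and to use the trace $\Delta f=-\frac{R}{n-1}f-\frac{n}{n-1}$. The third derivatives $\nabla_i\nabla_j\nabla_kf$ are then rewritten through \eqref{eq:critical}. The cyclic condition kills the terms $\nabla_i\nabla_kR_{jk}$, and the symmetrized combination of $\nabla_iR_{jk}\nabla_kf$-type terms reduces to $\nabla_kf\nabla_kR_{ij}$. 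With $a=0$ and $nb=-\frac{n}{n-1}$ we expect
\[
2f\Delta R_{ij}+4\nabla_kf\,\nabla_kR_{ij}-\frac{n}{n-1}\mathring R_{ij}=0.
\]
Any Riemann-tensor contractions such as $R_{ikjl}\mathring R_{kl}$ and $R_{ik}R_{kj}$ produced by the commutations must cancel between the two computations. I expect verifying this cancellation, which relies on \eqref{eq:cyclic} and on \eqref{eq:critical} feeding back in, to be the main obstacle. I would track it by writing everything in terms of $\mathring R$ and checking the trace.

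We then contract with $f\mathring R_{ij}$. Using $f\Delta|\mathring R|^2=2f\mathring R_{ij}\Delta\mathring R_{ij}+2f|\nabla\mathring R|^2$, the result takes the form of a weighted divergence:
\[
\diver\!\bigl(f^2\nabla|\mathring R|^2\bigr)=2f^2|\nabla\mathring R|^2+\frac{n}{n-1}f|\mathring R|^2 .
\]
Here one checks that $f^2\Delta|\mathring R|^2+2f\nabla f\cdot\nabla|\mathring R|^2=\diver(f^2\nabla|\mathring R|^2)$, which matches the coefficient $4$ after halving. Integrating over $M$, the boundary term vanishes because $f=0$ on $\partial M$. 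Both terms on the right are then nonnegative, since $f>0$ in the interior. Hence $f|\mathring R|^2\equiv0$, so $\mathring R=0$ in the interior, and by continuity on all of $M$.

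Thus $g$ is Einstein and $R$ is constant. The classification of Einstein Miao--Tam critical metrics~\cite{MT11} then identifies $(M,g)$ with a geodesic ball in the simply connected space form of curvature $R/[n(n-1)]$. Orientation and compactness with boundary are the hypotheses of that result. The trichotomy according to the sign of $R$ is immediate.
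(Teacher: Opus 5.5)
Your outline follows the paper's proof. It derives constancy of $R$ from the traced cyclic condition, obtains the tensor identity \eqref{eq:critical-tensor-identity} with $a=0$ and $nb=-\frac{n}{n-1}$, and contracts it with $f\mathring R_{ij}$ to reach \eqref{eq:mt-divergence}. It then integrates with the boundary term killed by $f|_{\partial M}=0$, uses positivity and continuity, and applies the Miao--Tam Einstein classification. Every formula you actually write down is correct. The gap is in the one step you leave open, the derivation of the tensor identity, and the mechanism you predict there is wrong. The Riemann-tensor products do not cancel between the two computations. Applying $\Delta$ directly to the right-hand side of \eqref{eq:critical} gives $f\Delta R_{ij}$ and no curvature products at all. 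In \eqref{eq:hessian-laplacian}, however, the block $R_{ik}\nabla_k\nabla_jf+R_{jk}\nabla_i\nabla_kf-2R_{ikjl}\nabla_k\nabla_lf$ becomes $2f(R_{ik}R_{kj}-R_{ikjl}R_{kl})$ after substituting \eqref{eq:critical}. Only its metric part cancels. So the comparison by itself yields
\[
f\Delta R_{ij}+2f\bigl(R_{ikjl}R_{kl}-R_{ik}R_{kj}\bigr)+4\nabla_kf\,\nabla_kR_{ij}-\frac{n}{n-1}\mathring R_{ij}=0 .
\]
Contracting this with $f\mathring R_{ij}$ leaves a cubic curvature term of no definite sign, so the integral argument would not close.

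The missing ingredient is a second-order use of the cyclic condition, namely \eqref{eq:cyclic-laplacian}. Take the divergence of $D_{ijk}=0$ and commute: $\nabla_k\nabla_iR_{jk}=\nabla_i\nabla_kR_{jk}-R_{ikjl}R_{kl}+R_{ik}R_{kj}$. The first term vanishes because $\nabla_kR_{jk}=0$, and this is the proper place for your remark that the cyclic condition kills $\nabla_i\nabla_kR_{jk}$. The result is $\Delta R_{ij}=2R_{ikjl}R_{kl}-2R_{ik}R_{kj}$. This converts the curvature block into a second copy of $f\Delta R_{ij}$, which is exactly why the identity you expect has $2f\Delta R_{ij}$. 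Your coefficient is right, but it comes from this conversion, not from a cancellation. With that identity supplied, the rest of your proposal goes through as written and matches the paper.
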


\Needspace{17\baselineskip}
\section{Preliminaries}

\subsection{Notation}

We write $\Rm=\{R_{ijkl}\}$, $\Ric=\{R_{ij}\}$, and
$\mathring R_{ij}=R_{ij}-\frac Rn g_{ij}$. Repeated indices are
contracted with $g$, and $\Delta$ denotes the rough Laplacian.
Our curvature sign convention is fixed by
\[
\nabla_i\nabla_j\nabla_k f-\nabla_j\nabla_i\nabla_k f
=R_{ijkl}\nabla_lf.
\]
All integrals use the Riemannian volume element $dV$.
Powers and traces of symmetric tensors are taken after raising one
index. In particular,
\begin{equation}\label{eq:scalar-powers}
\begin{aligned}
\tr(\Ric^2)&=|\Ric|^2,
&\tr(P^2)&=|P|^2,\\
\tr(\Ric^3)&=R_{ij}R_{jk}R_{ki},
&\tr(P^3)&=P_{ij}P_{jk}P_{ki},\\
\tr(\Ric^4)&=R_{ij}R_{jk}R_{kl}R_{li},
&\tr(P^4)&=P_{ij}P_{jk}P_{kl}P_{li}.
\end{aligned}
\end{equation}
Tensor products with two free indices remain in components, such as
$R_{ik}R_{kj}$ and $P_{ik}P_{kj}$.

\subsection{Consequences of the cyclic condition}

\begin{lemma}\label{lemma:cyclic}
If $D_{ijk}=0$, then the scalar curvature is constant and
\begin{equation}\label{eq:cyclic-divergence}
\nabla_iR=0,\qquad \nabla_jR_{ij}=0.
\end{equation}
Moreover,
\begin{equation}\label{eq:cyclic-laplacian}
\Delta R_{ij}=2R_{ikjl}R_{kl}-2R_{ik}R_{kj}.
\end{equation}
With the Cotton tensor written as
$C_{ijk}=\nabla_iR_{jk}-\nabla_jR_{ik}$ under constant scalar
curvature, one also has
\begin{equation}\label{eq:cyclic-cotton}
|C|^2=3|\nabla\Ric|^2,
\qquad
C_{ijk}R_{jk}=\frac34\nabla_i|\Ric|^2.
\end{equation}
\end{lemma}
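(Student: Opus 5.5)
The plan is to derive every claim from contractions of $D_{ijk}=0$, using the contracted second Bianchi identity once and one Ricci commutation formula. I will use that $D_{ijk}$ is totally symmetric, because $R_{jk}$ is symmetric.

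\emph{Constant scalar curvature.} Contracting $D_{ijk}=0$ in $j,k$ gives
\[
\nabla_iR+2\nabla_jR_{ij}=0 .
\]
The contracted Bianchi identity $\nabla_jR_{ij}=\tfrac12\nabla_iR$ turns this into $2\nabla_iR=0$. Since $M$ is connected, $R$ is constant, and then $\nabla_jR_{ij}=\tfrac12\nabla_iR=0$. This proves \eqref{eq:cyclic-divergence}.

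\emph{Laplacian formula.} Write the cyclic condition as
\[
\nabla_kR_{ij}=-\nabla_iR_{jk}-\nabla_jR_{ki}
\]
and apply $\nabla_k$. This gives
\[
\Delta R_{ij}=-\nabla_k\nabla_iR_{jk}-\nabla_k\nabla_jR_{ik}.
\]
In each term I commute $\nabla_k$ past $\nabla_i$ (respectively $\nabla_j$). The term $\nabla_i\nabla_kR_{jk}$ vanishes by the divergence-free property just proved, so only curvature terms survive from the Ricci commutation formula for a symmetric $2$-tensor. One of them contracts a Riemann trace into Ricci and gives $R_{ik}R_{kj}$. The other is a full contraction $R_{ikjl}R_{kl}$. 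Each of the two terms contributes $R_{ik}R_{kj}-R_{ikjl}R_{kl}$, and after the overall minus sign this yields \eqref{eq:cyclic-laplacian}.

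The main obstacle is this step. It is pure bookkeeping: every sign and index placement must be consistent with the stated convention $\nabla_i\nabla_j\nabla_kf-\nabla_j\nabla_i\nabla_kf=R_{ijkl}\nabla_lf$. I would first derive from that convention the commutator on a covector and the trace identity expressing $R_{ij}$ as a contraction of $R_{ijkl}$. As a check I would test the result on the round sphere. There $\Ric$ is parallel, so both sides of \eqref{eq:cyclic-laplacian} must vanish; with $R_{ij}=(n-1)g_{ij}$ this forces $R_{ikjl}g_{kl}=R_{ij}$, which confirms the sign of $R_{ikjl}R_{kl}$.

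\emph{Cotton identities.} Expanding the square gives
\[
|C|^2=2|\nabla\Ric|^2-2\nabla_iR_{jk}\nabla_jR_{ik}.
\]
Contracting $D_{ijk}=0$ with $\nabla_iR_{jk}$ and using the symmetry of $R_{jk}$ gives
\[
|\nabla\Ric|^2+2\nabla_iR_{jk}\nabla_jR_{ik}=0,
\]
so the cross term equals $-\tfrac12|\nabla\Ric|^2$. Substituting, $|C|^2=3|\nabla\Ric|^2$.

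For the second identity, set $X_i=R_{jk}\nabla_jR_{ik}$. Then
\[
C_{ijk}R_{jk}=\tfrac12\nabla_i|\Ric|^2-X_i .
\]
Contracting $D_{ijk}=0$ with $R_{jk}$ gives
\[
\tfrac12\nabla_i|\Ric|^2+2X_i=0,
\]
since the terms $\nabla_jR_{ki}R_{jk}$ and $\nabla_kR_{ij}R_{jk}$ are both equal to $X_i$ after relabelling. Hence $X_i=-\tfrac14\nabla_i|\Ric|^2$, and therefore $C_{ijk}R_{jk}=\tfrac34\nabla_i|\Ric|^2$, which completes \eqref{eq:cyclic-cotton}.
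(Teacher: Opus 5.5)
Your proposal is correct and follows the paper's route: trace $D_{ijk}$ and apply the contracted Bianchi identity, take the divergence of $D$ and commute derivatives, then contract $D$ with $\nabla_iR_{jk}$ and with $R_{jk}$. The paper writes the cross-term step as the expansion of $\frac13|D|^2$. Since $D$ is totally symmetric, this is the same as your contraction with $\nabla_iR_{jk}$.
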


\begin{proof}
Tracing \eqref{eq:cyclic} in its last two indices gives
\[
0=g^{jk}D_{ijk}
  =\nabla_iR+2\nabla_jR_{ij}=2\nabla_iR.
\]
This proves \eqref{eq:cyclic-divergence}.
Taking the divergence of the full cyclic condition, we find
\[
0=\nabla_k\nabla_iR_{jk}
  +\nabla_k\nabla_jR_{ki}+\Delta R_{ij}.
\]
The first term can be computed without a derivative of the scalar
curvature:
\begin{align*}
\nabla_k\nabla_iR_{jk}
&=\nabla_i\nabla_kR_{jk}
  +R_{kijl}R_{lk}+R_{kikl}R_{jl}\\
&=-R_{ikjl}R_{kl}+R_{ik}R_{kj}.
\end{align*}
The second term gives the same expression. Substitution proves
\eqref{eq:cyclic-laplacian}.

Expanding the squared norm of the cyclic condition and relabelling
the summed indices yields
\[
0=\frac{1}{3}|D|^2=|\nabla\Ric|^2
       +2\nabla_iR_{jk}\nabla_jR_{ik}.
\]
It follows that
\[
|C|^2
=2|\nabla\Ric|^2
 -2\nabla_iR_{jk}\nabla_jR_{ik}
=3|\nabla\Ric|^2.
\]
Finally, contracting $D_{ijk}=0$ with $R_{jk}$ gives
\[
0=\frac12\nabla_i|\Ric|^2+2\nabla_jR_{ik}R_{jk}.
\]
Therefore
\[
C_{ijk}R_{jk}
=\frac12\nabla_i|\Ric|^2-\nabla_jR_{ik}R_{jk}
=\frac34\nabla_i|\Ric|^2.
\]
\end{proof}

\subsection{The Hessian--Laplacian formula}

We use the standard commutation formula
\cite[Lemma~3 ]{BDRR17}:
\begin{lemma}\label{lemma:hessian-laplacian}
For any smooth function $f$,
\begin{equation}\label{eq:hessian-laplacian}
\begin{aligned}
\Delta(\nabla_i\nabla_jf)
={}&\nabla_i\nabla_j\Delta f
 +R_{ik}\nabla_k\nabla_jf+R_{jk}\nabla_i\nabla_kf\\
&-2R_{ikjl}\nabla_k\nabla_lf\\
&+(\nabla_iR_{jk}+\nabla_jR_{ik}-\nabla_kR_{ij})\nabla_kf.
\end{aligned}
\end{equation}
\end{lemma}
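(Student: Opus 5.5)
The plan is to commute derivatives directly, in the order Hessian, Laplacian. I write
\[
\Delta(\nabla_i\nabla_jf)=\nabla_k\nabla_k\nabla_i\nabla_jf
\]
and move $\nabla_i$ to the left in two stages, each governed by the sign convention
\[
\nabla_i\nabla_j\nabla_kf-\nabla_j\nabla_i\nabla_kf=R_{ijkl}\nabla_lf
\]
and its extension to $2$-tensors.

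First I use the third-order commutation $\nabla_k\nabla_i\nabla_jf=\nabla_i\nabla_k\nabla_jf+R_{kijl}\nabla_lf$, and apply $\nabla_k$ to both sides. This gives
\[
\nabla_k\nabla_k\nabla_i\nabla_jf
=\nabla_k\nabla_i\nabla_k\nabla_jf
+\nabla_kR_{kijl}\,\nabla_lf
+R_{kijl}\nabla_k\nabla_lf .
\]
Second, I commute the outer pair in $\nabla_k\nabla_i(\nabla_k\nabla_jf)$, treating $\nabla_k\nabla_jf$ as a symmetric $2$-tensor. Each index produces a curvature term: one of the form $R_{kikl}\nabla_l\nabla_jf$, which contracts to a Ricci term, and one of the form $R_{kijl}\nabla_k\nabla_lf$. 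This leaves $\nabla_i\nabla_k\nabla_k\nabla_jf$.

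Third, inside this expression I rewrite $\nabla_k\nabla_k\nabla_jf$ by commuting $\nabla_k\nabla_j\nabla_kf$, which yields $\nabla_j\Delta f+R_{jl}\nabla_lf$. Applying $\nabla_i$ then gives
\[
\nabla_i\nabla_j\Delta f+\nabla_iR_{jl}\,\nabla_lf+R_{jl}\nabla_i\nabla_lf .
\]

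Collecting the terms, I expect to obtain:
\begin{itemize}
\item $\nabla_i\nabla_j\Delta f$;
\item the two Ricci--Hessian terms $R_{ik}\nabla_k\nabla_jf$ and $R_{jk}\nabla_i\nabla_kf$, one from the tensor commutation and one from the last step;
\item two copies of the full-curvature--Hessian contraction, which combine to $-2R_{ikjl}\nabla_k\nabla_lf$ once the symmetries are applied;
\item the first-order terms $\nabla_iR_{jl}\nabla_lf+\nabla_kR_{kijl}\nabla_lf$.
\end{itemize}

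The contracted second Bianchi identity handles the first-order terms: $\nabla_kR_{kijl}$ equals $\pm(\nabla_jR_{il}-\nabla_lR_{ij})$. With the right sign this turns the gradient terms into $(\nabla_iR_{jk}+\nabla_jR_{ik}-\nabla_kR_{ij})\nabla_kf$.

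The main obstacle is sign bookkeeping rather than any single step. I need consistent signs for the curvature symmetries $R_{kijl}$ versus $R_{ikjl}$, for the trace convention fixing $R_{kikl}$ as $\pm R_{il}$, and for the form of contracted Bianchi that matches the stated convention. I would fix these first, by checking the convention on the round sphere (where $R_{ijkl}\nabla_lf$ must reproduce $R_{jk}$ with the correct sign). I would then verify the final formula against the trivial check $f=$ a distance-squared function in flat space, and confirm that the result is symmetric in $i,j$, which it must be.
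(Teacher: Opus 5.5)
Your proposal is correct and follows the paper's proof essentially step for step: commute the inner third derivative, commute the outer pair on the Hessian viewed as a $2$-tensor, rewrite $\nabla_k\nabla_k\nabla_jf=\nabla_j\Delta f+R_{jl}\nabla_lf$, and finish with the contracted second Bianchi identity. Under the paper's convention, the signs you left open are $R_{kikl}=R_{il}$, $\nabla_kR_{kijl}=\nabla_jR_{il}-\nabla_lR_{ij}$ and $2R_{kijl}\nabla_k\nabla_lf=-2R_{ikjl}\nabla_k\nabla_lf$, and with these the computation gives exactly the stated formula.
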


\begin{proof}
First commute the inner derivatives, and then commute derivatives
of the Hessian as a two-tensor. Using Ricci identity, we get
\begin{align*}
\Delta(\nabla_i\nabla_jf)
={}&\nabla_k\bigl(\nabla_i\nabla_k\nabla_jf
                      +R_{kijl}\nabla_lf\bigr)\\
={}&\nabla_i\nabla_k\nabla_k\nabla_jf
 +R_{il}\nabla_l\nabla_jf
 +2R_{kijl}\nabla_k\nabla_lf
 +(\nabla_kR_{kijl})\nabla_lf.
\end{align*}
The remaining divergence formulas are
\[
\nabla_k\nabla_k\nabla_jf=\nabla_j\Delta f+R_{jl}\nabla_lf,
\qquad
\nabla_kR_{kijl}=\nabla_jR_{il}-\nabla_lR_{ij}.
\]
Substitution, followed by $R_{kijl}=-R_{ikjl}$ and a relabelling
of the contracted indices, gives \eqref{eq:hessian-laplacian}.
\end{proof}

\

\subsection{The tensors for quasi-Einstein manifolds}

Following He--Petersen--Wylie~\cite{HPW12}, we set
\begin{equation}\label{eq:rho-P}
\rho=\frac{(n-1)\lambda-R}{m-1},
\qquad P=\Ric-\rho g.
\end{equation}
Consequently,
\begin{equation}\label{eq:trace-P}
\tr(P)=(n-1)\lambda-(m+n-1)\rho.
\end{equation}
The modified curvature tensor is
\begin{equation}\label{eq:Q-definition}
Q=\Rm+\frac1mP\owedge g
        +\frac{\rho-\lambda}{2m}g\owedge g,
\end{equation}
where, in components,
\[
(P\owedge g)_{ijkl}
=P_{ik}g_{jl}+P_{jl}g_{ik}-P_{il}g_{jk}-P_{jk}g_{il}.
\]
The radial identity of He--Petersen--Wylie~\cite[Proposition~6.2]{HPW12}
takes the form
\begin{equation}\label{eq:Q-radial}
\begin{aligned}
Q_{ijkl}\nabla_lu
={}&\frac{u}{m}(\nabla_iP_{jk}-\nabla_jP_{ik})\\
&-\frac1m(P_{jl}g_{ik}-P_{il}g_{jk})\nabla_lu.
\end{aligned}
\end{equation}
Its constant-scalar-curvature specialization is derived directly in
the proof of Theorem~\ref{thm:quasi}.

\section{Proofs of the main results}

\subsection{Gradient Ricci solitons}

\begin{proof}[Proof of Theorem~\ref{thm:soliton}]
By Lemma~\ref{lemma:cyclic}, the scalar curvature $R$ is constant.
It is a basic fact that every gradient Ricci soliton satisfies
\[
\nabla_iR=2R_{ij}\nabla_jf.
\]
Hence
\begin{equation}\label{eq:soliton-radial-Ric}
R_{ij}\nabla_jf=0.
\end{equation}

Differentiating \eqref{eq:soliton-radial-Ric}, with $j$ as the
free index, yields
\[
0=\nabla_i(R_{jk}\nabla_kf)
  =(\nabla_iR_{jk})\nabla_kf+R_{jk}\nabla_i\nabla_kf.
\]
Substitution of $\nabla_i\nabla_kf=\lambda g_{ik}-R_{ik}$ gives
\begin{equation}\label{eq:soliton-differentiated-annihilation}
(\nabla_iR_{jk})\nabla_kf
=R_{ik}R_{kj}-\lambda R_{ij}.
\end{equation}
The same identity with $i$ and $j$ interchanged has the same
right-hand side. We now contract the full cyclic condition with
$\nabla_kf$:
\[
0=D_{ijk}\nabla_kf
 =(\nabla_iR_{jk})\nabla_kf
  +(\nabla_jR_{ki})\nabla_kf
  +(\nabla_kR_{ij})\nabla_kf.
\]
Using \eqref{eq:soliton-differentiated-annihilation} in the first
two terms, we obtain the pointwise identity
\begin{equation}\label{eq:soliton-flow-Ric}
\nabla_kf\,\nabla_kR_{ij}
=2\lambda R_{ij}-2R_{ik}R_{kj}.
\end{equation}
This step uses the full cyclic condition; constancy of the scalar
curvature alone does not give \eqref{eq:soliton-flow-Ric}.

Taking the trace of \eqref{eq:soliton-flow-Ric}, we have
\[
0=\langle\nabla f,\nabla R\rangle
 =2\lambda R-2|\Ric|^2,
\]
and hence
\begin{equation}\label{eq:soliton-second-trace}
|\Ric|^2=\lambda R.
\end{equation}
If $\lambda=0$, this gives $\Ric=0$ and $\nabla^2f=0$. Since $f$ is
nonconstant, the global splitting and affine potential in part
\textup{(iii)} follow from \cite[Proposition~3.2]{PW}. Henceforth,
assume $\lambda\ne0$.

Since $\lambda$ and $R$ are constant, $|\Ric|^2$ is constant.
Contracting \eqref{eq:soliton-flow-Ric} with $2R_{ij}$ therefore
gives
\[
\begin{aligned}
0
&=\langle\nabla f,\nabla|\Ric|^2\rangle\\
&=2R_{ij}\nabla_kf\,\nabla_kR_{ij}\\
&=4\lambda|\Ric|^2-4\tr(\Ric^3).
\end{aligned}
\]
Thus
\begin{equation}\label{eq:soliton-third-trace}
\tr(\Ric^3)=\lambda^2R.
\end{equation}
The scalar function $\tr(\Ric^3)$ is now constant as well. The product
rule and symmetry of the Ricci tensor yield
\[
\begin{aligned}
0
&=\langle\nabla f,\nabla\tr(\Ric^3)\rangle\\
&=3R_{ik}R_{kj}\,\nabla_\ell f\,\nabla_\ell R_{ij}\\
&=6\lambda\tr(\Ric^3)-6\tr(\Ric^4).
\end{aligned}
\]
It follows that
\begin{equation}\label{eq:soliton-fourth-trace}
\tr(\Ric^4)=\lambda^3R.
\end{equation}
Combining \eqref{eq:soliton-second-trace}--\eqref{eq:soliton-fourth-trace},
we obtain
\[
\begin{aligned}
\left|R_{ik}R_{kj}-\lambda R_{ij}\right|^2
&=\tr(\Ric^4)-2\lambda\tr(\Ric^3)+\lambda^2|\Ric|^2\\
&=\lambda^3R-2\lambda^3R+\lambda^3R\\
&=0.
\end{aligned}
\]
Consequently,
\begin{equation}\label{eq:soliton-Ric-polynomial}
R_{ik}R_{kj}=\lambda R_{ij}.
\end{equation}
Every eigenvalue of $\Ric$ is therefore either $0$ or $\lambda$.
Thus
\[
0\leq\Ric\leq\lambda g\quad\text{if }\lambda>0,
\qquad
\lambda g\leq\Ric\leq0\quad\text{if }\lambda<0.
\]
The calculation above is valid for both signs of $\lambda$.
Substituting
\eqref{eq:soliton-Ric-polynomial} into
\eqref{eq:soliton-flow-Ric} also gives
\begin{equation}\label{eq:soliton-flow-zero}
\nabla_kf\,\nabla_kR_{ij}=0.
\end{equation}
No division by $|\nabla f|$ has been used, so all these identities
hold at the critical points of $f$ as well.

We next verify the radial flatness needed for the global
classification. With our curvature convention, differentiation
of \eqref{eq:soliton} and the Ricci commutation identity give
\begin{equation}\label{eq:soliton-curvature-gradient}
R_{ijkl}\nabla_lf=\nabla_jR_{ik}-\nabla_iR_{jk}.
\end{equation}
Contracting with $\nabla_jf$ and differentiating
\eqref{eq:soliton-radial-Ric}, we find
\[
\begin{aligned}
R_{ijkl}\nabla_jf\nabla_lf
&=(\nabla_jf)\nabla_jR_{ik}-(\nabla_jf)\nabla_iR_{jk}\\
&=(\nabla_jf)\nabla_jR_{ik}+R_{jk}\nabla_i\nabla_jf\\
&=(\nabla_jf)\nabla_jR_{ik}+\lambda R_{ik}-R_{ij}R_{jk}\\
&=0.
\end{aligned}
\]
The last equality follows from \eqref{eq:soliton-flow-zero} and
\eqref{eq:soliton-Ric-polynomial}. Hence every sectional
curvature of a plane containing $\nabla f$ vanishes wherever
$\nabla f\neq0$.

The soliton is complete, has constant scalar curvature, and is
radially flat. Petersen--Wylie's rigidity theorem
\cite[Theorem~1.2]{PW}, which applies to both shrinking and expanding
solitons, therefore implies that its universal cover is an isometric
product
\[
N^{n-k}\times\mathbb R^k,
\qquad \Ric_N=\lambda g_N.
\]
After translating the Euclidean coordinates and adding a constant
to the lifted potential, we may normalize it as
\[
f=\frac{\lambda}{2}|x|^2.
\]
Since $f$ is nonconstant, the Euclidean factor has dimension
$k\geq1$. In particular, the product has parallel Ricci tensor, and
this property descends to $M$.

Let $\Gamma$ be the deck transformation group. The product
splitting and the lifted potential are preserved by $\Gamma$, whose
action on the Euclidean factor is orthogonal, as in the rigid-bundle
description of~\cite{PW}. We distinguish the two signs of $\lambda$.

If $\lambda>0$, a positive-dimensional factor $N$ is compact by the
Bonnet--Myers theorem. Every deck transformation preserves the
compact minimum set $N\times\{0\}$, so proper discontinuity implies
that $\Gamma$ is finite.

If $\lambda<0$, the factor $N$ is negative Einstein and may be
noncompact. The deck group can be infinite. For example, the product
of a closed hyperbolic surface of curvature $-1$ with
$\mathbb R^{n-2}$, equipped with $f=-|x|^2/2$, is an expanding
soliton with infinite deck group.

In either case, if $N$ is a point, every deck transformation fixes
the unique critical point of the potential, so the free action is
trivial. Conversely, every quotient in the non-steady cases has
parallel Ricci tensor and satisfies the soliton equation with the
displayed potential. Together with the steady case already treated,
this proves the theorem.
\end{proof}

\subsection{Quasi-Einstein manifolds}

\begin{proof}[Proof of Theorem~\ref{thm:quasi}]
Multiplication of $u$ by a positive constant does not change
\eqref{eq:quasi}, so we normalize $\max_Mu=1$.
By Lemma~\ref{lemma:cyclic}, the scalar curvature $R$ is constant.
Consequently, $\rho$ and $\tr(P)$ are also constant, with
\begin{equation}\label{eq:quasi-trace-P}
\tr(P)=(n-1)\lambda-(m+n-1)\rho.
\end{equation}
The standard quasi-Einstein identity
\[
P_{ij}\nabla_j u=\frac{u}{2}\nabla_i\rho
\]
therefore gives
\begin{equation}\label{eq:quasi-P-gradient-zero}
P_{ij}\nabla_j u=0.
\end{equation}

Since $\rho$ is constant, the cyclic condition can be written as
\begin{equation}\label{eq:quasi-P-cyclic}
\nabla_iP_{jk}+\nabla_jP_{ki}+\nabla_kP_{ij}=0.
\end{equation}
Differentiating \eqref{eq:quasi-P-gradient-zero}, we find
\begin{align}
(\nabla_iP_{jk})\nabla_k u
&=-P_{jk}\nabla_i\nabla_k u\notag\\
&=-\frac{u}{m}
  P_{jk}\bigl(P_{ik}+(\rho-\lambda)g_{ik}\bigr)\notag\\
&=-\frac{u}{m}
  \bigl(P_{ik}P_{kj}-(\lambda-\rho)P_{ij}\bigr).
\label{eq:quasi-differentiated-gradient}
\end{align}
Contracting \eqref{eq:quasi-P-cyclic} with $\nabla_k u$ and
applying \eqref{eq:quasi-differentiated-gradient} to its first
two terms gives the following pointwise identity:
\begin{equation}\label{eq:quasi-first-order}
\nabla_k u\,\nabla_kP_{ij}
=\frac{2u}{m}
 \bigl(P_{ik}P_{kj}-(\lambda-\rho)P_{ij}\bigr).
\end{equation}
In particular, the sign on the right-hand side follows from
$\nabla^2u=\frac{u}{m}(\Ric-\lambda g)$.

Taking the trace of \eqref{eq:quasi-first-order}, we obtain
\[
0=\langle\nabla u,\nabla\tr(P)\rangle
=\frac{2u}{m}
 \bigl(|P|^2-(\lambda-\rho)\tr(P)\bigr).
\]
Since $u>0$ in the interior of $M$, it follows that
\begin{equation}\label{eq:quasi-P-norm}
|P|^2=(\lambda-\rho)\tr(P).
\end{equation}
Hence $|P|^2$ is constant. Contracting
\eqref{eq:quasi-first-order} with $2P_{ij}$ now gives
\[
0=\langle\nabla u,\nabla|P|^2\rangle
=\frac{4u}{m}
 \bigl(\tr(P^3)-(\lambda-\rho)|P|^2\bigr),
\]
and therefore
\begin{equation}\label{eq:quasi-P-cube}
\tr(P^3)=(\lambda-\rho)|P|^2.
\end{equation}
The scalar $\tr(P^3)$ is consequently constant. A further
contraction of \eqref{eq:quasi-first-order}, this time with
$3P_{jk}P_{ki}$, yields
\[
0=\langle\nabla u,\nabla \tr(P^3)\rangle
=\frac{6u}{m}
 \bigl(\tr(P^4)-(\lambda-\rho)\tr(P^3)\bigr).
\]
Combining these identities, we conclude that
\begin{align*}
\left|P_{ik}P_{kj}-(\lambda-\rho)P_{ij}\right|^2
&=\tr(P^4)-2(\lambda-\rho)\tr(P^3)+(\lambda-\rho)^2|P|^2\\
&=0.
\end{align*}
Thus
\begin{equation}\label{eq:quasi-P-polynomial}
P_{ik}P_{kj}=(\lambda-\rho)P_{ij}.
\end{equation}
These identities, initially obtained in the interior, extend
to the boundary by continuity.

Substituting \eqref{eq:quasi-P-polynomial} into
\eqref{eq:quasi-first-order} and
\eqref{eq:quasi-differentiated-gradient}, respectively, gives
\begin{equation}\label{eq:quasi-two-zero-derivatives}
\nabla_k u\,\nabla_kP_{ij}=0,
\qquad
(\nabla_iP_{jk})\nabla_k u=0.
\end{equation}
We make the required specialization of \eqref{eq:Q-radial}
explicit. Commuting third derivatives in \eqref{eq:quasi},
and using the constancy of $\rho$, gives
\[
\begin{aligned}
R_{ijkl}\nabla_lu
={}&\frac{u}{m}(\nabla_iP_{jk}-\nabla_jP_{ik})\\
&+\frac1m\bigl((\nabla_i u)P_{jk}-(\nabla_j u)P_{ik}\bigr)\\
&+\frac{\rho-\lambda}{m}
  \bigl(g_{jk}\nabla_i u-g_{ik}\nabla_j u\bigr).
\end{aligned}
\]
Adding the remaining terms in \eqref{eq:Q-definition}, the
displayed algebraic terms cancel. The two terms left from
$P\owedge g$ vanish by \eqref{eq:quasi-P-gradient-zero}.
Consequently,
\[
Q_{ijkl}\nabla_l u
=\frac{u}{m}
 \bigl(\nabla_iP_{jk}-\nabla_jP_{ik}\bigr).
\]
Contracting this equation with $\nabla_j u$ and applying
\eqref{eq:quasi-two-zero-derivatives}, we obtain
\begin{equation}\label{eq:quasi-radial-Q-zero}
Q_{ijkl}\nabla_j u\nabla_l u=0.
\end{equation}
This is the radial $Q$-flatness condition. No division by
$|\nabla u|$ has been used, so it also holds at the critical
points of $u$.

We next determine the signs of the constants and the
multiplicities of the Ricci eigenvalues. If $\lambda-\rho=0$,
then \eqref{eq:quasi-P-polynomial} implies $P=0$, and
\eqref{eq:quasi} gives $\nabla^2u=0$. The boundary condition
and the maximum principle would then imply $u=0$, a
contradiction. Thus $\lambda-\rho\ne0$. Equation
\eqref{eq:quasi-P-polynomial} shows that the eigenvalues of
$P$ are $0$ and $\lambda-\rho$, with constant multiplicities.
Since $u$ is nonconstant, \eqref{eq:quasi-P-gradient-zero}
shows that the zero eigenvalue occurs. Put
\[
k=\dim\ker P,\qquad 1\leq k\leq n.
\]
At an interior maximum of $u$, its Hessian is nonpositive.
Its restriction to the nonzero subspace $\ker P$ is
$\frac{u}{m}(\rho-\lambda)g$. It follows that
$\lambda-\rho>0$. By \eqref{eq:quasi-trace-P},
\[
(n-k)(\lambda-\rho)
=\tr(P)
=(n-1)\lambda-(m+n-1)\rho.
\]
Consequently,
\begin{equation}\label{eq:quasi-rho-scaling}
\rho=\frac{k-1}{m+k-1}\lambda,
\qquad
\lambda-\rho=\frac{m}{m+k-1}\lambda.
\end{equation}
In particular, $\lambda>0$.

We now pass to the universal cover, equipped with the lifted
metric and potential. It is complete as a manifold with
boundary and simply connected; the lifted potential is
positive in its interior and vanishes on its boundary.
The structure is non-trivial, its scalar curvature is
constant, and $\lambda>0$. Moreover, the lifted curvature
tensor is bounded, because $M$ is compact, so it satisfies
the curvature-growth hypothesis in the rigidity theorem of
He--Petersen--Wylie. Equation
\eqref{eq:quasi-radial-Q-zero} is precisely their remaining
radial curvature hypothesis. Thus all the assumptions of
\cite[Theorem~1.11]{HPW14} are satisfied, and the universal
cover is rigid.

The classification of the rigid examples in
\cite[Proposition~2.6 and Table~2.1]{HPW14}, together with
\eqref{eq:quasi-rho-scaling}, gives
\[
\mathbb{S}^k_+\!
 \left(\sqrt{\frac{m+k-1}{\lambda}}\right)
 \times N^{n-k},
\qquad
\Ric_N=\lambda g_N.
\]
The factor $N$ is complete and simply connected. The lifted
potential depends only on the hemisphere factor and, under
our normalization, is its normalized height function:
\[
u=\cos\left(\sqrt{\frac{\lambda}{m+k-1}}\,r\right),
\]
where $r$ is the distance from the pole. In particular, the
Ricci tensor of the product, and hence that of $M$, is
parallel.

If $n-k=1$, the equation $\Ric_N=\lambda g_N$ is impossible
because $\lambda>0$ and a one-dimensional manifold has zero
Ricci curvature. If $n-k\geq2$, the Bonnet--Myers theorem
implies that $N$ is compact. The universal cover is therefore
compact in every case, so the covering group is finite. Its
isometric action is free and preserves the lifted potential.
When $k=n$, every such isometry of the hemisphere fixes the
unique maximum of $u$, so the free action must be trivial.
Thus this case gives the round hemisphere itself.

Finally, the case $k=1$ is included: the hemisphere factor is
the interval
\[
\left[-\frac{\pi}{2}\sqrt{\frac{m}{\lambda}},
       \frac{\pi}{2}\sqrt{\frac{m}{\lambda}}\right],
\]
with its standard metric and
$u=\cos(\sqrt{\lambda/m}\,r)$, where $r$ now denotes the
signed interval coordinate. This proves the theorem.
\end{proof}

\subsection{Vacuum static spaces and related critical spaces}

We now consider the unified critical equation \eqref{eq:critical}.
By Lemma~\ref{lemma:cyclic}, the condition $D=0$ implies that $R$ is
constant and $\nabla_j\mathring R_{ij}=0$. Taking the trace of
\eqref{eq:critical} gives
\begin{equation}\label{eq:critical-trace}
\Delta f=-\frac{R}{n-1}f+nb.
\end{equation}
The next proposition is obtained by computing the Laplacian of the
Hessian in two ways.

\begin{proposition}\label{prop:critical-tensor}
A solution of \eqref{eq:critical} with cyclic parallel Ricci tensor
satisfies the pointwise tensor identity
\begin{equation}\label{eq:critical-tensor-identity}
2(f+a)\Delta R_{ij}
+4\nabla_kf\,\nabla_kR_{ij}
+\left(\frac{aR}{n-1}+nb\right)\mathring R_{ij}=0.
\end{equation}
Consequently,
\begin{equation}\label{eq:critical-scalar-identity}
\begin{aligned}
\diver\left((f+a)^2\nabla|\mathring{\Ric}|^2\right)
={}&2(f+a)^2|\nabla\mathring{\Ric}|^2\\
&-\left(\frac{aR}{n-1}+nb\right)
 (f+a)|\mathring{\Ric}|^2.
\end{aligned}
\end{equation}
Both identities hold also on the level set $\{f+a=0\}$.
\end{proposition}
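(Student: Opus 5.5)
The plan is to compute $\Delta(\nabla_i\nabla_jf)$ in two ways and compare. Throughout, we use that $R$ is constant by Lemma~\ref{lemma:cyclic}, so $\nabla\mathring R_{ij}=\nabla R_{ij}$ and $\Delta\mathring R_{ij}=\Delta R_{ij}$. We write the critical equation \eqref{eq:critical} as
\[
\nabla_i\nabla_jf=(f+a)\mathring R_{ij}+c\,g_{ij},\qquad c=b-\frac{Rf}{n(n-1)}.
\]

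\emph{First expression.} Apply $\Delta$ directly to the right-hand side. This gives
\[
(f+a)\Delta R_{ij}+2\nabla_kf\,\nabla_kR_{ij}+\Delta f\,\mathring R_{ij}-\frac{R}{n(n-1)}\Delta f\,g_{ij}.
\]
Then substitute $\Delta f$ from \eqref{eq:critical-trace}.

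\emph{Second expression.} Use Lemma~\ref{lemma:hessian-laplacian}, treating its terms one at a time.
\begin{enumerate}
\item By \eqref{eq:critical-trace}, $\nabla_i\nabla_j\Delta f=-\frac{R}{n-1}\nabla_i\nabla_jf$.
\item In the last bracket, the cyclic condition gives $\nabla_iR_{jk}+\nabla_jR_{ik}=-\nabla_kR_{ij}$. That term therefore equals $-2\nabla_kf\,\nabla_kR_{ij}$.
\item For the Ricci and Riemann terms, insert the Hessian $(f+a)\mathring R+cg$.
\begin{itemize}
\item The $c$-parts cancel: $+2cR_{ij}$ from the Ricci terms against $-2cR_{ij}$ from the curvature term.
\item In the Ricci terms, $2(f+a)R_{ik}\mathring R_{kj}=2(f+a)\bigl(R_{ik}R_{kj}-\frac RnR_{ij}\bigr)$. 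Its trace part cancels the $+\frac{2R}{n}(f+a)R_{ij}$ coming from $-2R_{ikjl}\cdot\frac Rn g_{kl}$.
\item What remains is $(f+a)\bigl(2R_{ik}R_{kj}-2R_{ikjl}R_{kl}\bigr)=-(f+a)\Delta R_{ij}$, by \eqref{eq:cyclic-laplacian}.
\end{itemize}
\end{enumerate}
So the second expression is
\[
-\frac{R}{n-1}\bigl((f+a)\mathring R_{ij}+cg_{ij}\bigr)-(f+a)\Delta R_{ij}-2\nabla_kf\,\nabla_kR_{ij}.
\]

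\emph{Equating the two.} Subtracting the expressions gives $2(f+a)\Delta R_{ij}+4\nabla_kf\,\nabla_kR_{ij}$ plus algebraic terms. We then check the algebraic coefficients separately.
\begin{itemize}
\item The coefficient of $\mathring R_{ij}$ is $-\frac{Rf}{n-1}+nb+\frac{R(f+a)}{n-1}=\frac{aR}{n-1}+nb$.
\item The coefficient of $g_{ij}$ is $-\frac{R}{n(n-1)}\bigl(-\frac{Rf}{n-1}+nb\bigr)+\frac{R}{n-1}\bigl(b-\frac{Rf}{n(n-1)}\bigr)=0$.
\end{itemize}
This yields \eqref{eq:critical-tensor-identity}. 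No division by $f+a$ or $|\nabla f|$ occurs, so the identity holds everywhere, including on $\{f+a=0\}$. The main obstacle is this bookkeeping: tracking the $c$- and $\frac Rn$-terms so that the $g_{ij}$ part cancels exactly, and using the full cyclic condition rather than only constancy of $R$.

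\emph{Scalar identity.} Contract \eqref{eq:critical-tensor-identity} with $(f+a)\mathring R_{ij}$, using
\[
\mathring R_{ij}\nabla_kR_{ij}=\tfrac12\nabla_k|\mathring{\Ric}|^2,\qquad \mathring R_{ij}\Delta R_{ij}=\tfrac12\Delta|\mathring{\Ric}|^2-|\nabla\mathring{\Ric}|^2.
\]
This gives
\[
(f+a)^2\Delta|\mathring{\Ric}|^2-2(f+a)^2|\nabla\mathring{\Ric}|^2+2(f+a)\nabla f\cdot\nabla|\mathring{\Ric}|^2+\Bigl(\tfrac{aR}{n-1}+nb\Bigr)(f+a)|\mathring{\Ric}|^2=0.
\]
The first and third terms together are exactly $\diver\bigl((f+a)^2\nabla|\mathring{\Ric}|^2\bigr)$, which gives \eqref{eq:critical-scalar-identity}. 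Again this holds pointwise everywhere, including on $\{f+a=0\}$.
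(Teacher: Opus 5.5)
Your proposal is correct and follows the paper's proof essentially step for step. You compute $\Delta(\nabla_i\nabla_jf)$ both directly and via Lemma~\ref{lemma:hessian-laplacian}, use the uncontracted cyclic condition for the derivative block and \eqref{eq:cyclic-laplacian} for the curvature block, and contract with $(f+a)\mathring R_{ij}$ exactly as the paper does; the shorthand $c=b-\frac{Rf}{n(n-1)}$ only streamlines checking that the $g_{ij}$-terms cancel.
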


\begin{proof}
Since $R$ is constant, applying $\Delta$ directly to
\eqref{eq:critical} and using \eqref{eq:critical-trace}, we obtain
\begin{equation}\label{eq:critical-first-laplacian}
\begin{aligned}
\Delta(\nabla_i\nabla_jf)
={}&(f+a)\Delta R_{ij}
+2\nabla_kf\,\nabla_kR_{ij}\\
&+\left(-\frac{R}{n-1}f+nb\right)\mathring R_{ij}
-\frac{R}{n(n-1)}\Delta f\,g_{ij}.
\end{aligned}
\end{equation}
Here $\Delta\mathring R_{ij}=\Delta R_{ij}$ and
$\nabla_k\mathring R_{ij}=\nabla_kR_{ij}$, because $\nabla R=0$.

Next, we apply the Hessian--Laplacian formula
\eqref{eq:hessian-laplacian}. The traced equation gives
\[
\nabla_i\nabla_j\Delta f
=-\frac{R}{n-1}\nabla_i\nabla_jf.
\]
The cyclic condition is used in its uncontracted form to simplify
the first derivatives of the Ricci tensor:
\begin{equation}\label{eq:critical-derivative-block}
\nabla_iR_{jk}+\nabla_jR_{ik}-\nabla_kR_{ij}
=-2\nabla_kR_{ij}.
\end{equation}
For the curvature terms in \eqref{eq:hessian-laplacian},
substitution of \eqref{eq:critical} yields
\begin{equation}\label{eq:critical-curvature-block}
\begin{aligned}
&R_{ik}\nabla_k\nabla_jf+R_{jk}\nabla_i\nabla_kf
 -2R_{ikjl}\nabla_k\nabla_lf\\
&\qquad=2(f+a)\left(R_{ik}\mathring R_{kj}
 -R_{ikjl}\mathring R_{kl}\right)\\
&\qquad=2(f+a)\left(R_{ik}R_{kj}-R_{ikjl}R_{kl}\right)\\
&\qquad=-(f+a)\Delta R_{ij}.
\end{aligned}
\end{equation}
To verify the first equality, the terms proportional to the metric
in \eqref{eq:critical} cancel because
\[
R_{ik}g_{kj}+R_{jk}g_{ik}-2R_{ikjl}g_{kl}=0.
\]
The scalar parts of $\mathring R_{kj}$ and $\mathring R_{kl}$
cancel in the second equality. The last equality is exactly
\eqref{eq:cyclic-laplacian}, which was obtained by taking the
divergence of $D=0$.

Combining these three observations, the Hessian--Laplacian formula
becomes
\begin{equation}\label{eq:critical-second-laplacian}
\begin{aligned}
\Delta(\nabla_i\nabla_jf)
={}&-\frac{R}{n-1}\nabla_i\nabla_jf
 -(f+a)\Delta R_{ij}-2\nabla_kf\,\nabla_kR_{ij}\\
={}&-\frac{R}{n-1}(f+a)\mathring R_{ij}
 -(f+a)\Delta R_{ij}-2\nabla_kf\,\nabla_kR_{ij}\\
&-\frac{R}{n-1}
 \left(b-\frac{R}{n(n-1)}f\right)g_{ij}.
\end{aligned}
\end{equation}
The terms proportional to $g_{ij}$ in
\eqref{eq:critical-first-laplacian} and
\eqref{eq:critical-second-laplacian} agree, since
\[
-\frac{R}{n(n-1)}\Delta f
=-\frac{R}{n-1}\left(b-\frac{R}{n(n-1)}f\right).
\]
Comparison of the remaining terms proves
\eqref{eq:critical-tensor-identity}.

We next contract \eqref{eq:critical-tensor-identity} with
$\mathring R_{ij}$. Using
\[
\mathring R_{ij}\Delta R_{ij}
=\frac12\Delta|\mathring{\Ric}|^2-|\nabla\mathring{\Ric}|^2,
\qquad
\mathring R_{ij}\nabla_kR_{ij}
=\frac12\nabla_k|\mathring{\Ric}|^2,
\]
we obtain
\begin{equation}\label{eq:critical-scalar-before-weight}
\begin{aligned}
&(f+a)\Delta|\mathring{\Ric}|^2
 +2\langle\nabla f,\nabla|\mathring{\Ric}|^2\rangle\\
&\qquad=2(f+a)|\nabla\mathring{\Ric}|^2
 -\left(\frac{aR}{n-1}+nb\right)|\mathring{\Ric}|^2.
\end{aligned}
\end{equation}
Multiplication by $f+a$ and the product rule give
\eqref{eq:critical-scalar-identity}. No division by $f+a$ occurs
in this argument, so the formulas hold on its zero set as well.
\end{proof}

\begin{remark}
Because $R$ is constant,
$|\Ric|^2=|\mathring{\Ric}|^2+R^2/n$ and
$|\nabla\Ric|^2=|\nabla\mathring{\Ric}|^2$. Thus
\eqref{eq:critical-scalar-identity} can equivalently be written with
$\nabla|\Ric|^2$ on the left and $|\nabla\Ric|^2$ on the right.
The rigidity consequences below use this pointwise differential
identity together with integration over the compact manifold.
\end{remark}

\begin{remark}[The same comparison for solitons and quasi-Einstein manifolds]
\label{rem:hessian-comparison}
The comparison of two expressions for the Laplacian of the Hessian in
Proposition~\ref{prop:critical-tensor}, using the cyclic parallel Ricci
condition, applies equally to gradient Ricci solitons and quasi-Einstein
manifolds. To avoid repeating this second-order computation for each
class, the preceding proofs of Theorems~\ref{thm:soliton} and
\ref{thm:quasi} use first-order pointwise identities and trace
contractions instead. We record the resulting differential identities.

For gradient Ricci solitons and quasi-Einstein manifolds, respectively,
\begin{align}
\diver\!\left(e^{-f}\nabla|\Ric|^2\right)
&=2e^{-f}|\nabla\Ric|^2,
\label{eq:remark-soliton-divergence}\\
\diver\!\left(u^{m+1}\nabla|\Ric|^2\right)
&=2u^{m+1}|\nabla\Ric|^2.
\label{eq:remark-quasi-divergence}
\end{align}
The second identity holds in $\operatorname{int}(M)$, where $u>0$.
In both cases, the preceding trace identities imply that $|\Ric|^2$
is constant. Thus the left-hand sides vanish, and positivity of the
weights gives $\nabla\Ric=0$ pointwise. In the quasi-Einstein case,
the conclusion extends to the boundary by continuity.
\end{remark}

\subsubsection{Closed vacuum static spaces}

\begin{proof}[Proof of Theorem~\ref{thm:static}]
For a vacuum static space, the constants in the critical equation are
$a=b=0$. Thus
\begin{equation}
\nabla_i\nabla_j f
=f\left(R_{ij}-\frac{R}{n-1}g_{ij}\right),
\qquad
\Delta f=-\frac{R}{n-1}f.
\end{equation}
Since $D=0$, the scalar curvature is constant. Multiplying the second
equation by $f$ and integrating over the closed manifold gives
\begin{equation}
\int_M |\nabla f|^2\,dV
=\frac{R}{n-1}\int_M f^2\,dV.
\end{equation}
The potential is nonconstant, so the left-hand side is positive.
Consequently, $R>0$.

Putting $a=b=0$ in the scalar identity of
Proposition~\ref{prop:critical-tensor} and integrating, we obtain
\begin{equation}
0=2\int_M f^2|\nabla\mathring{\Ric}|^2\,dV
 =2\int_M f^2|\nabla\Ric|^2\,dV.
\end{equation}
Hence $\nabla\Ric=0$ at every point where $f\ne0$.
We verify that this set is dense without imposing an analyticity
assumption. If $f$ vanished on a nonempty open set, choose a point in
that set. Both $f$ and $\nabla f$ vanish there. Along any unit-speed
geodesic issuing from this point, the static equation reads
\begin{equation}
\frac{d^2}{dt^2}f(\gamma(t))
=\left(\Ric(\dot\gamma(t),\dot\gamma(t))
       -\frac{R}{n-1}\right)f(\gamma(t)).
\end{equation}
Uniqueness for this linear ordinary differential equation, with zero
initial value and zero initial derivative, shows that $f$ vanishes
along the entire geodesic. Completeness and connectedness then imply
$f\equiv0$, a contradiction. Therefore $\{f\ne0\}$ is dense, and
continuity yields
\begin{equation}
\nabla\Ric=0\qquad\text{on }M.
\end{equation}

We now give the geometric classification directly. Since the Ricci
tensor is parallel, its eigenvalues are constant and its
eigendistributions are parallel. By the de Rham decomposition theorem,
the complete simply connected manifold splits as a Riemannian product
of the factors corresponding to these eigendistributions. At least
one eigenvalue differs from $R/(n-1)$; otherwise, taking the trace would
give $R=nR/(n-1)$, contrary to $R>0$. Choose such an eigenspace and
denote its constant eigenvalue by $R_{11}$ in an adapted local
orthonormal frame. In the following, $R_{11}$ always denotes this
chosen eigenvalue.

Let the chosen factor have dimension $k$, and combine the remaining
factors into $N^{n-k}$. Thus, at this stage, the product is written as
$M^k\times N^{n-k}$. We first show that $f$ depends only on $M^k$.
The mixed components of the static equation give
$\nabla_i\nabla_l f=0$ when $i$ is tangent to $M^k$ and $l$ is tangent
to $N$. If $i,j$ are tangent to $M^k$ and $l$ is tangent to $N$, the
product connection and the vanishing of mixed curvature give
\begin{equation}
\nabla_l\nabla_i\nabla_j f
=\nabla_i\nabla_l\nabla_j f=0.
\end{equation}
Differentiating the static equation in the same directions gives
\begin{equation}
0=\left(R_{11}-\frac{R}{n-1}\right)
  (\nabla_l f)g_{ij}.
\end{equation}
Our choice of eigenvalue implies $\nabla_l f=0$. Thus $f$ is independent
of $N$. The static equation restricted to $N$ consequently gives
\begin{equation}
0=f\left(R_{ij}-\frac{R}{n-1}g_{ij}\right)
\qquad (i,j\text{ tangent to }N).
\end{equation}
Evaluating at a point of $M^k$ where $f\ne0$, we obtain
\begin{equation}
\Ric_N=\frac{R}{n-1}g_N.
\end{equation}

On the chosen factor, the nonconstant function $f$ satisfies
\begin{equation}
\nabla_i\nabla_j f
=\left(R_{11}-\frac{R}{n-1}\right)fg_{ij}.
\end{equation}
Here the coefficient is strictly negative. To see this using only
completeness and boundedness, take a complete unit-speed geodesic
through a point where $f\ne0$. Along this geodesic,
\begin{equation}
\frac{d^2}{dt^2}f(\gamma(t))
=\left(R_{11}-\frac{R}{n-1}\right)f(\gamma(t)).
\end{equation}
A positive coefficient admits no bounded nonzero solution on the
whole real line. This contradicts the boundedness of $f$ on the closed
manifold. The coefficient is nonzero by construction, and therefore
\begin{equation}
R_{11}<\frac{R}{n-1}.
\end{equation}

The chosen factor cannot be one-dimensional: a complete simply
connected one-dimensional factor is $\mathbb R$, which would make the
product noncompact. Hence $k\ge2$. Applying Obata's theorem
\cite[Theorem~A]{Obata} to this complete factor shows that it is a round
sphere of sectional curvature $R/(n-1)-R_{11}$. Its Ricci curvature is
therefore
\begin{equation}
R_{11}g_{ij}
=(k-1)\left(\frac{R}{n-1}-R_{11}\right)g_{ij}.
\end{equation}
It follows that
\begin{equation}
R_{11}=\frac{(k-1)R}{k(n-1)},
\qquad
\frac{R}{n-1}-R_{11}=\frac{R}{k(n-1)}.
\end{equation}
Consequently,
\begin{equation}
(M^n,g)
\cong
\mathbb S^k\!\left(\sqrt{\frac{k(n-1)}{R}}\right)
\times N^{n-k},
\qquad
\Ric_N=\frac{R}{n-1}g_N.
\end{equation}
The factor $N$ is closed and simply connected, and it is understood to
be a point when $k=n$. A one-dimensional $N$ is impossible because its
Ricci tensor would vanish, whereas $R/(n-1)>0$. The potential depends
only on the spherical factor and satisfies the corresponding Obata
equation; in particular, it is a nonzero multiple of a height
function. This proves the theorem.
\end{proof}

\begin{remark}
The conclusion $\nabla\Ric=0$ does not require simple connectivity.
For an arbitrary closed connected vacuum static space with a
nonconstant potential and $D=0$, apply the product argument to its
universal cover. The lifted potential remains bounded, so all the
preceding arguments apply up to the exclusion of a one-dimensional
factor. If $k\ge2$, the universal cover is
\begin{equation}
\mathbb S^k\!\left(\sqrt{\frac{k(n-1)}{R}}\right)
\times N^{n-k},
\qquad
\Ric_N=\frac{R}{n-1}g_N.
\end{equation}
The factor $N$ is complete and simply connected, and its positive
Einstein constant makes it compact by Myers' theorem when it has
positive dimension. Thus the deck group in this case is finite.
If $k=1$, the universal cover is
\begin{equation}
\mathbb R\times N^{n-1},
\qquad
\Ric_N=\frac{R}{n-1}g_N,
\end{equation}
and, after translating the real coordinate, the lifted potential is
a nonzero constant multiple of
$\cos\bigl(\sqrt{R/(n-1)}\,t\bigr)$. The original manifold is obtained
by a group of deck transformations preserving this potential. Such
transformations may couple translations in the real factor with
isometries of $N$.

The nonconstancy assumption distinguishes these cases from a nonzero
constant static potential. For such a constant potential, the trace
of the static equation gives $R=0$, and the full equation then gives
$\Ric=0$.
\end{remark}

\subsubsection{CPE metrics}

\begin{proof}[Proof of Theorem~\ref{thm:cpe}]
Here $a=1$ and $b=0$. Although the theorem fixes
$R=n(n-1)>0$, positivity also follows from nonconstancy of $f$:
multiplication of \eqref{eq:critical-trace} by $f$ and integration
give
\[
\int_M|\nabla f|^2\,dV
=\frac{R}{n-1}\int_Mf^2\,dV>0.
\]

Since $\nabla_i\mathring R_{ij}=0$, the CPE equation implies
\begin{align*}
\nabla_i(\mathring R_{ij}\nabla_jf)
&=\mathring R_{ij}\nabla_i\nabla_jf\\
&=(f+1)|\mathring{\Ric}|^2.
\end{align*}
Integrating this identity over the closed manifold, we obtain
\begin{equation}\label{eq:cpe-linear-weight}
\int_M(f+1)|\mathring{\Ric}|^2\,dV=0.
\end{equation}
This is consistent with Lemma 2 in ~\cite{Zhang}.

By Proposition~\ref{prop:critical-tensor},
\[
\begin{aligned}
\diver\left((f+1)^2\nabla|\mathring{\Ric}|^2\right)
={}&2(f+1)^2|\nabla\mathring{\Ric}|^2\\
&-\frac{R}{n-1}(f+1)|\mathring{\Ric}|^2.
\end{aligned}
\]
Integrating and using \eqref{eq:cpe-linear-weight} yields
\[
0=2\int_M(f+1)^2|\nabla\mathring{\Ric}|^2\,dV.
\]
Consequently, $\nabla\Ric=0$ on $\{f+1\neq0\}$.

This set is dense in $M$. Indeed, if $f=-1$ on a nonempty open
set, then $\nabla^2f=0$ there, and \eqref{eq:critical} gives
\[
-\frac{R}{n(n-1)}g_{ij}=0,
\]
contrary to $R>0$. Continuity now implies $\nabla\Ric=0$
everywhere. Returning to \eqref{eq:critical-tensor-identity}, we
obtain
\[
\frac{R}{n-1}\mathring R_{ij}=0.
\]
Thus the metric is Einstein and the CPE equation reduces to
\[
\nabla_i\nabla_jf=-\frac{R}{n(n-1)}fg_{ij}.
\]
Obata's theorem~\cite{Obata}, applied to the nonconstant function
$f$, identifies $(M^n,g)$ with
$\mathbb S^n(\sqrt{n(n-1)/R})$. Under the stated normalization,
this is the unit round sphere $\mathbb S^n(1)$.
\end{proof}

\subsubsection{Miao--Tam critical metrics}

\begin{proof}[Proof of Theorem~\ref{thm:mt}]
Taking $a=0$ and $b=-1/(n-1)$ in
\eqref{eq:critical-scalar-identity}, we have
\begin{equation}\label{eq:mt-divergence}
\diver\left(f^2\nabla|\mathring{\Ric}|^2\right)
=2f^2|\nabla\mathring{\Ric}|^2
+\frac{n}{n-1}f|\mathring{\Ric}|^2.
\end{equation}
Since $f=0$ on the smooth boundary, the boundary integral in the
divergence theorem vanishes. Therefore,
\begin{equation}\label{eq:mt-final-integral}
0=2\int_Mf^2|\nabla\mathring{\Ric}|^2\,dV
+\frac{n}{n-1}\int_Mf|\mathring{\Ric}|^2\,dV.
\end{equation}
Both integrands are nonnegative. The assumption $f>0$ in the
interior implies $\mathring{\Ric}=0$ there, and then everywhere
by continuity. Hence $(M^n,g)$ is Einstein.

The classification theorem for Einstein Miao--Tam critical metrics
\cite[Theorem~1.1]{MT11} now shows that $(M^n,g)$ is isometric to
a geodesic ball in a simply connected space form. Its sectional
curvature is $R/[n(n-1)]$, so the ambient space is Euclidean,
hyperbolic, or spherical according to the sign of $R$.
No restriction on the sign of $R$ was used in
\eqref{eq:mt-final-integral}.
\end{proof}
\section{Acknowledgement}
The author would like to express his sincere gratitude to his advisor, Jian Ye.

\end{document}